\setlist{
%  noitemsep,
  listparindent=3em,
%  labelsep=2em, 
  itemindent=\parindent,
  parsep=0pt,
  leftmargin=\parindent
}
\newlist{nested}{enumerate}{5}
\setlist[nested]{
  nosep,    
  noitemsep,
%  align=Case,
  listparindent=2\parindent,
%  labelsep=2em, 
%  itemindent=3em,
    leftmargin=2\parindent,
  parsep=0pt
}
\newcommand{\old}[1]{}
\mathchardef\mhyphen="2D % Define a "math hyphen"
\newtheorem{theorem}{Theorem}
\newtheorem{cor}[theorem]{Corollary}
\newtheorem{dfn}[theorem]{Definition}
\newcounter{cases}
\newcounter{subcases}[cases]
\theoremstyle{definition}
\newtheorem{example} {Example}
\DeclarePairedDelimiter\floor{\lfloor}{\rfloor}
\def\etal{{\it et~al.}\,}
\begin{document}

\title{Improved Bounds for Permutation Arrays Under Chebyshev Distance} 
\author{Sergey Bereg,\thanks{
Department of Computer Science,
University of Texas at Dallas,
Box 830688,
Richardson, TX 75083
USA.}
\and Mohammadreza Haghpanah,$^*$
\and Brian Malouf,$^*$
\and I. Hal Sudborough$^*$
}

\maketitle

\begin{abstract}
Permutation arrays under the Chebyshev metric have been considered for error correction in noisy channels. 
Let $P(n,d)$ denote the maximum size of any array of permutations on $n$ symbols with pairwise Chebyshev distance $d$. 
We give new techniques and improved upper and lower bounds on $P(n,d)$, including a precise formula for $P(n,2)$.
\end{abstract}

%--------------------------------------------

\section{Introduction}
\label{sec:intro}

In \cite{Klove10} an  interesting study of permutation arrays under the Chebyshev metric was presented. This complemented many studies of permutation arrays under other metrics, such as the Hamming metric  \cite{bls-18} \cite{bmmms-19}
\cite{chu2004}, Kendall $\tau$  metric \cite{Jiang} \cite{buzaglo}, and several others \cite{Deza2}.
The use of the Chebyshev metric was motivated by applications of error correcting codes and recharging in flash memories \cite{Jiang}.

Let $\sigma$ and $\pi$ be two permutations (or strings) over an alphabet $\Sigma \subseteq [1 ... n] = \{1,2, ... , n\}$.
The Chebyshev distance between $\sigma$ and $\pi$, denoted by $d(\sigma,\pi)$,
is $\max \{ ~|\sigma(i) - \pi(i)|~ | ~ i \in \Sigma ~\} $. For an array (set) A of permutations (strings), the pairwise Chebyshev distance of $A$, denoted by $d(A)$, is $\min \{ ~d(\sigma,\pi)~ |~ \sigma, \pi \in A ~ \}$. An array A of permutations on $[1...n]$ with $d(A)=d$ will be called an $(n,d)$ PA. Note that this includes the case when 
A is a set of integers, $i.e.$ a set of strings of length one, where $d(A)$ corresponds to the minimum difference between integers in the set.
Let $P(n,d)$ denote the maximum cardinality of any $(n,d)$-PA $A$. 
More generally, let $P_d(\Sigma)$ denote the maximum cardinality of any array of permutations over the alphabet $\Sigma \subseteq [1 ... n]$ with Chebyshev distance $d$.
For example, $P_2(\{1,3,5,7\})$ = 4! = 24, whereas $P(4,2)=6$.

We present several methods to improve on lower and upper bounds for $P(n,d)$. For comparison, 
we begin with the following theorem from \cite{Klove10}.

\begin{theorem}
\label{recursion2}(\cite{Klove10})
If $n > d \ge 1$, then
$P(n + 1,d) \ge ( \floor{\frac{n}{d}} + 1)  P(n,d)$.
\end{theorem}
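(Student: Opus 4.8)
The plan is to build an $(n+1,d)$ PA from an optimal $(n,d)$ PA by ``inserting'' a new largest coordinate whose value ranges over a maximally spread-out set of integers. Concretely, fix an $(n,d)$ PA $A$ with $|A| = P(n,d)$, and let $V = \{1, 1+d, 1+2d, \dots\} \cap [1..n+1]$, so that any two members of $V$ differ by at least $d$ and $|V| = \floor{\frac{n}{d}}+1$. For each pair $(\sigma, v)$ with $\sigma \in A$ and $v \in V$, I would define a permutation $\pi_{\sigma,v}$ on $[1..n+1]$ by placing $v$ in a new position $n+1$ and re-using $\sigma$ on the first $n$ positions after making room for $v$: set $\pi_{\sigma,v}(n+1)=v$, and for $i \le n$ put $\pi_{\sigma,v}(i) = \sigma(i)$ if $\sigma(i) < v$ and $\pi_{\sigma,v}(i) = \sigma(i)+1$ if $\sigma(i) \ge v$. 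First I would check that this is a genuine permutation of $[1..n+1]$ and that the map $(\sigma,v) \mapsto \pi_{\sigma,v}$ is injective (one recovers $v$ as the last entry and then un-bumps to recover $\sigma$), so the construction yields exactly $|A|\cdot|V| = (\floor{\frac{n}{d}}+1)\,P(n,d)$ distinct permutations.

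The core of the argument is to show every two of these permutations are at Chebyshev distance at least $d$, which I would split into cases according to whether the inserted values and/or the base permutations agree. When $v \ne v'$, coordinate $n+1$ alone forces $|\pi_{\sigma,v}(n+1) - \pi_{\tau,v'}(n+1)| = |v-v'| \ge d$, so this case is immediate regardless of $\sigma,\tau$. When $v = v'$ but $\sigma \ne \tau$, I would argue that the bumping rule is non-contracting on the first $n$ coordinates: at each position $i$ the two entries are shifted by the same indicator $[\,\cdot \ge v\,]$, and a short case check gives $|\pi_{\sigma,v}(i)-\pi_{\tau,v}(i)| \ge |\sigma(i)-\tau(i)|$, so the witness coordinate realizing $d(\sigma,\tau) \ge d$ survives.

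The step I expect to require the most care is exactly this non-contraction claim, since a priori incrementing one entry by $1$ and not the other could shrink a gap; the point to verify is that, because both permutations use the \emph{same} threshold $v$, whenever the two entries straddle $v$ it is the larger one that gets incremented, so the gap can only grow. Once that is in hand, the three cases together show the constructed family is an $(n+1,d)$ PA of the claimed size, giving $P(n+1,d) \ge (\floor{\frac{n}{d}}+1)P(n,d)$. I would finally remark that the hypothesis $n > d$ guarantees $|V| \ge 2$, so the bound is a genuine multiplicative improvement over $P(n,d)$.
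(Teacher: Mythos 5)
Your proof is correct and follows essentially the same approach the paper takes (via its Theorem \ref{1step} together with the remark that $P_d([1..(n+1)]-\{i\}) \ge P(n,d)$, and matching Kl{\o}ve's construction in Theorem \ref{n/d}): fix a $d$-spread set of values for one new position and extend each permutation of an optimal $(n,d)$ PA by an order-preserving ``bump'' relabeling. The only differences are cosmetic --- you insert the new symbol in the last position rather than the first, and you verify explicitly the non-contraction property that the paper leaves implicit.
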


To generalize, let $A$ be a subset of $[1...(n+1)]$ such that $d(A) \ge d$, then, for all $i \in A$, $P_d([1..(n+1)]-\{i\}) \ge P(n,d)$. Observe that the set $\{1,d+1,2d+1, ... , \floor{\frac{n}{d}}d+1\}$ is a subset of $[1...(n+1)]$ with $\floor{\frac{n}{d}}+1$ elements with Chebyshev distance
d and was used in \cite{Klove10} to prove Theorem \ref{recursion2}. 

\begin{theorem} 
\label{1step}
Let $A$ be a subset of $[1...(n+1)]$ such that $d(A) \ge d$.
If $n > d \ge 1$, then
$P(n + 1,d) \ge \sum_{i \in A}   P_d([1..(n+1)]-\{i\})$.
\end{theorem}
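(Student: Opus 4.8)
The plan is to build an $(n+1,d)$-PA $B$ directly, by taking, for each value $i \in A$, a maximum array of permutations over the reduced alphabet $[1..(n+1)]-\{i\}$ and appending the symbol $i$ in a single \emph{common} coordinate. Concretely, I would reserve position $n+1$ as a distinguished coordinate. For each $i \in A$, let $C_i$ be an array over the alphabet $[1..(n+1)]-\{i\}$ (using positions $1,\dots,n$) with $d(C_i)\ge d$ and $|C_i| = P_d([1..(n+1)]-\{i\})$. Then extend every $c \in C_i$ to a permutation $\sigma_c$ on $[1..(n+1)]$ by setting $\sigma_c(n+1)=i$ and $\sigma_c(j)=c(j)$ for $1\le j\le n$; since the symbols $[1..(n+1)]-\{i\}$ together with $i$ exhaust $[1..(n+1)]$, each $\sigma_c$ is a genuine permutation. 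Finally take $B=\bigcup_{i\in A}\{\sigma_c : c\in C_i\}$.

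The heart of the argument is verifying $d(B)\ge d$, which I would split into two cases according to whether two chosen permutations come from the same block $C_i$ or from different blocks. If $\sigma_c,\sigma_{c'}$ both come from $C_i$ with $c\ne c'$, then they agree in coordinate $n+1$ but inherit $\max_{1\le j\le n}|c(j)-c'(j)| = d(c,c') \ge d$ from $C_i$, so their Chebyshev distance is at least $d$. If instead $\sigma_c$ comes from $C_i$ and $\sigma_{c'}$ from $C_j$ with $i\ne j$, then coordinate $n+1$ alone already gives $|\sigma_c(n+1)-\sigma_{c'}(n+1)| = |i-j|\ge d(A)\ge d$, since $i,j$ are distinct elements of $A$. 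This second case is exactly where the hypothesis $d(A)\ge d$ enters, and it is the conceptual crux of the construction: pinning the deleted symbol to one shared coordinate converts the pairwise separation of $A$ into pairwise Chebyshev separation across blocks.

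It then remains to count. The extensions within a single block are distinct because the underlying elements of $C_i$ are distinct, and extensions from different blocks are distinct because they disagree at coordinate $n+1$; hence the union is disjoint and $|B|=\sum_{i\in A}|C_i|=\sum_{i\in A}P_d([1..(n+1)]-\{i\})$, yielding the claimed bound. I expect the only real obstacle to be bookkeeping rather than mathematics: making the ``reserved coordinate'' convention precise so that each $C_i$ is honestly an array over $[1..(n+1)]-\{i\}$ and the $\sigma_c$ are honest permutations of $[1..(n+1)]$, and recording that an array of pairwise distance $\ge d$ already certifies the lower bound $P(n+1,d)\ge|B|$ (so nothing is lost in working with $d(B)\ge d$ rather than $d(B)=d$). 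As a consistency check, specializing to $A=\{1,d+1,2d+1,\dots,\floor{\frac{n}{d}}d+1\}$, which has $\floor{\frac{n}{d}}+1$ elements and Chebyshev distance $d$, recovers Theorem~\ref{recursion2}.
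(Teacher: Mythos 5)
Your proposal is correct and takes essentially the same approach as the paper: the paper's proof also pins the deleted symbol to one shared coordinate (prepending $a_i$ at position $1$ rather than appending at position $n+1$), gets within-block distance from the sub-arrays over $[1..(n+1)]-\{a_i\}$, and gets cross-block distance from $d(A)\ge d$ at the reserved coordinate. The choice of which coordinate to reserve is purely cosmetic.
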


Theorem \ref{1step} is a generalization of Theorem \ref{recursion2} and often gives improved lower bounds. For example, using Theorem \ref{recursion2}, one obtains $P(11,3) \ge 36,132$, as $\floor{\frac{10}{3}}+1=4$ and the best lower bound currently known for $P(10,3)$ is 9,033. 
Using Theorem \ref{1step} and choosing $A=\{3,6,9\}$, one obtains the lower bound 53,549, as $P_3([1..11]-\{3\}) = P_3([1..11]-\{9\} \ge 17,573$ and $P_3([1..11]-\{6\}) \ge 18,403$.

Another recursive technique in \cite{Klove10} gave the following result.

\begin{theorem}
\label{recursion}
(\cite{Klove10})
 If $n>d$ and $r \ge 2$, then
$P(rn,rd) \ge P(n,d)^r$.
\end{theorem}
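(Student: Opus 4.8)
The plan is to prove $P(rn, rd) \ge P(n,d)^r$ by exhibiting an explicit construction: given an optimal $(n,d)$-PA, I would build an $(rn, rd)$-PA whose size is the $r$-th power of the original, by combining $r$ independent "copies" of permutations through a blocking/interleaving scheme on the larger symbol set $[1 \ldots rn]$.

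First I would fix an $(n,d)$-PA $A$ on the symbols $[1 \ldots n]$ with $|A| = P(n,d)$, and partition the target symbol set $[1 \ldots rn]$ into $n$ consecutive blocks $B_j = \{(j-1)r + 1, \ldots, jr\}$ for $j \in [1 \ldots n]$, each of size $r$. The idea is that a position or value labeled $k \in [1 \ldots n]$ in the small array should be "scaled up" to the block $B_k$, so that a Chebyshev distance of $d$ between small-array symbols expands to a distance of roughly $rd$ between the corresponding blocks. Concretely, I would index the new permutations by an $r$-tuple $(\sigma_1, \ldots, \sigma_r) \in A^r$ and define a single permutation $\tau$ on $[1 \ldots rn]$ that reads off these $r$ permutations in a coordinated way across the blocks; this immediately gives $|A|^r = P(n,d)^r$ candidate permutations, which is the required count.

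The key step is choosing the encoding so that two distinct tuples always yield Chebyshev distance at least $rd$. The natural map writes each new coordinate $i \in [1 \ldots rn]$ as $i = (a-1)r + b$ with $a \in [1 \ldots n]$, $b \in [1 \ldots r]$, and sets $\tau(i) = (\sigma_b(a) - 1)r + c$ for an appropriate choice of the residue $c \in [1 \ldots r]$ (possibly depending on $b$, arranged to keep $\tau$ a genuine permutation). Then if two tuples differ in their $b$-th component, so $\sigma_b \ne \sigma_b'$, there is some $a$ with $|\sigma_b(a) - \sigma_b'(a)| \ge d$; tracking this through the scaling, the corresponding new positions differ in value by at least $(d-1)r + (\text{residue gap})$, which I would arrange to be at least $rd$. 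The bookkeeping on the residues $c$ is what makes the two maps simultaneously bijective while preserving the distance lower bound.

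The main obstacle I anticipate is verifying that the construction both produces a valid permutation on $[1 \ldots rn]$ and genuinely amplifies every pairwise distance to at least $rd$, simultaneously. A scaling of the form $v \mapsto (v-1)r + c$ turns a value-difference of $d$ in the small array into $rd$ only when the residue parts align favorably; if they work against each other a difference of $d$ could shrink to $(d-1)r + 1 < rd$. The crux is therefore to show that whenever two small permutations are at distance exactly $d$, one can still certify distance $\ge rd$ in the lifted permutations — most cleanly by arguing on the coordinate achieving the maximum, where the block-index difference of $d$ forces a gap of at least $dr - (r-1)$ and then handling the boundary residue case separately, or by a cleaner interleaving that avoids the loss entirely. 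I would aim for an interleaving where position $i$ in block $B_a$ carries the value from block $B_{\sigma_b(a)}$ with the residue preserved, so that the minimum distance is exactly $r \cdot d(\sigma_b, \sigma_b')$, making the amplification exact and the bound transparent.
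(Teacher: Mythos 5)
Your construction is correct, and once you commit to the residue-preserving encoding in your final sentence --- position $(a-1)r+b$ carries value $(\sigma_b(a)-1)r+b$ --- the earlier hedging about residues fighting the scaling evaporates: the map $(a,b)\mapsto(\sigma_b(a),b)$ is a bijection of $[1..n]\times[1..r]$, so each $\tau$ is a genuine permutation of $[1..rn]$, and $|\tau((a-1)r+b)-\tau'((a-1)r+b)| = r\,|\sigma_b(a)-\sigma_b'(a)|$ exactly, so distinct tuples lie at Chebyshev distance exactly $r\cdot\max_b d(\sigma_b,\sigma_b') \ge rd$, giving $P(n,d)^r$ permutations as required. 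Be aware, though, that the paper never proves Theorem \ref{recursion} itself (it is quoted from \cite{Klove10}); what it proves is the generalization, Theorem \ref{general}, and by a structurally different construction: there the two factor permutations are \emph{concatenated} in the position coordinate, and only the value alphabets are interleaved, in blocks of sizes $d_1$ and $d_2$ via the maps $F_1$ and $F_2$, so that a value gap of $d_1$ in one factor is stretched \emph{additively} to at least $d_1+d_2=d$. That additive scheme accommodates asymmetric splits ($n_1\ne n_2$, $d_1\ne d_2$ sharing a common quotient $a$), which buys bounds such as $P(18,5)\ge P(11,3)\cdot P(7,2)$ that your uniform residue-class interleaving cannot produce; on the other hand, recovering the full $r$-fold statement from the paper's two-factor bound requires iterating it $r-1$ times, whereas your approach is a one-shot product construction with exact multiplicative distance amplification --- for this particular statement, arguably the cleaner and more transparent route.
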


For example, we use Theorem \ref{recursion} to get $P(18,4) \ge P(9,2)^2 = 2,520^2 = 514,382,400$.
Theorem \ref{recursion} is generalized by Theorem \ref{general}, which subsumes Theorem \ref{recursion} and gives several new lower bounds.
For example, we use Theorem \ref{general}, with a=3, to get $P(18,5) \ge P(11,3)*P(7,2) \ge 53,549 * 630 = 33,735,870$.

\begin{theorem} \label{general}
$P(n,d) \ge \max\{P(n_1,d_1)\cdot P(n_2,d_2)~|~ d_1+d_2=d$ and $n_1+n_2=n$ and, for some constant $a$, $n_1 = ad_1+r_1$ and $n_2 = ad_2+r_2$, with $0 \le~r_1 \le~d_1$ and
with $0 \le~r_2 \le~d_2\}$, where the maximum is taken over all possible values of ${n_1,n_2,d_1,d_2}$.
\end{theorem}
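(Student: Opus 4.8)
The plan is to prove Theorem~\ref{general} by a product (interleaving) construction: given an optimal $(n_1,d_1)$-PA $A_1$ on $[1..n_1]$ and an optimal $(n_2,d_2)$-PA $A_2$ on $[1..n_2]$, I will weave each pair $(\sigma,\tau)\in A_1\times A_2$ into a single permutation of $[1..n]$ so that the resulting $|A_1|\cdot|A_2| = P(n_1,d_1)\cdot P(n_2,d_2)$ permutations have pairwise Chebyshev distance at least $d=d_1+d_2$. Maximizing over the admissible splits $(n_1,n_2,d_1,d_2)$ then yields the claimed bound.

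First I would build two order-preserving embeddings $\phi_1\colon[1..n_1]\to[1..n]$ and $\phi_2\colon[1..n_2]\to[1..n]$ whose images $S_1,S_2$ partition $[1..n]$. This partition is the heart of the argument, and it is exactly where the common constant $a$ is used: cut $[1..n]$ into $a$ consecutive blocks of size $d=d_1+d_2$ followed by a remainder block of size $r_1+r_2\le d$; inside each full block assign the first $d_1$ symbols to $S_1$ and the remaining $d_2$ to $S_2$, and inside the remainder block assign the first $r_1$ to $S_1$ and the last $r_2$ to $S_2$. Because $n_1=ad_1+r_1$ and $n_2=ad_2+r_2$ with the \emph{same} $a$, this consumes exactly $n_1$ slots for $S_1$ and $n_2$ for $S_2$, so $\phi_1,\phi_2$ are well defined. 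I then set $\pi_{\sigma,\tau}(\phi_1(i))=\phi_1(\sigma(i))$ and $\pi_{\sigma,\tau}(\phi_2(j))=\phi_2(\tau(j))$; since $S_1,S_2$ partition both positions and values, $\pi_{\sigma,\tau}$ is a permutation of $[1..n]$, and $(\sigma,\tau)\mapsto\pi_{\sigma,\tau}$ is injective, giving the count $P(n_1,d_1)\cdot P(n_2,d_2)$.

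The key lemma is a distance-expansion property of the embeddings: whenever $|x-y|\ge d_1$ one has $|\phi_1(x)-\phi_1(y)|\ge d$, and symmetrically for $\phi_2$ with threshold $d_2$. Writing $i=q d_1+p$ with $1\le p\le d_1$, the block index $q$ and offset $p$ give $\phi_1(i)=q d+p$, so $\phi_1(x)-\phi_1(y)=(x-y)+(q_x-q_y)d_2$; a gap of at least $d_1$ forces the images into different blocks ($q_x>q_y$), which adds at least $d_2$ and pushes the total to at least $d_1+d_2=d$. With this in hand the distance bound reduces to a short case analysis: if two pairs $(\sigma,\tau)\neq(\sigma',\tau')$ differ in the first coordinate, then $A_1$ supplies a position $i$ with $|\sigma(i)-\sigma'(i)|\ge d_1$, and the expansion property makes $\pi_{\sigma,\tau}$ and $\pi_{\sigma',\tau'}$ differ by at least $d$ at position $\phi_1(i)$; the case $\tau\neq\tau'$ is symmetric through $\phi_2$.

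I expect the main obstacle to be pinning down the block layout so that the expansion property holds with the correct thresholds while the two interleaved codes still fit exactly into $[1..n]$ — that is, verifying that the common-$a$ hypothesis is precisely what aligns the two block structures and leaves a remainder of size $r_1+r_2\le d_1+d_2$. Once the embeddings and their expansion property are established, the counting and the case analysis are routine, and specializing to $n_1=(r-1)n,\ d_1=(r-1)d,\ n_2=n,\ d_2=d$ with $a=\lfloor n/d\rfloor$ and iterating recovers Theorem~\ref{recursion}.
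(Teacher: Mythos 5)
Your overall strategy is the same as the paper's: split $d=d_1+d_2$ and $n=n_1+n_2$, interleave the value ranges of the two PAs in blocks of sizes $d_1$ and $d_2$ aligned by the common parameter $a$, and prove a distance-expansion lemma (a gap of $d_1$ in one factor becomes a gap of at least $d_1+d_2$ after embedding). The paper concatenates the position sets rather than interleaving them, but that difference is immaterial, since applying one fixed bijection of positions to all codewords does not change Chebyshev distances. The genuine problem is your block layout. You tile the values as $[d_1\,|\,d_2]$ repeated $a$ times followed by a remainder block $[r_1\,|\,r_2]$, and assert the expansion lemma ``symmetrically for $\phi_2$.'' That symmetric claim is false at the seam between the last full block and the remainder: for $j=ad_2+p$ in the remainder you have $\phi_2(j)=ad+r_1+p$, not $ad+d_1+p$, so for $x=ad_2+p_x$ in the remainder and $y=(a-1)d_2+p_y$ in the last full block, $\phi_2(x)-\phi_2(y)=d_2+r_1+(p_x-p_y)$; taking $p_x=p_y$ gives a value gap of exactly $d_2$ whose image gap is $d_2+r_1<d$ whenever $r_1<d_1$. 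Concretely, take $(n_1,d_1,r_1)=(1,1,0)$ and $(n_2,d_2,r_2)=(3,2,1)$ with $a=1$, so $n=4$, $d=3$: your layout gives $S_1=\{1\}$, $S_2=\{2,3,4\}$, i.e.\ $\phi_2=(2,3,4)$, and the distance-$2$ permutations $\tau=123$, $\tau'=312$ of $[1..3]$ produce codewords $(1,2,3,4)$ and $(1,4,2,3)$, at Chebyshev distance $2<3=d$. So your construction fails to prove the theorem whenever $r_1<d_1$ and $r_2\ge 1$.

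The repair is exactly the paper's layout: place the $\Sigma_1$-remainder at the bottom of the value range and the $\Sigma_2$-remainder at the top, i.e.\ tile the values as $[r_1][d_2\,|\,d_1]^a[r_2]$. Formally, $F_1(x)=x$ for $1\le x\le r_1$ and $F_1(x)=x+sd_2$ for $(s-1)d_1+r_1< x\le sd_1+r_1$, while $F_2(x)=x+(t-1)d_1+r_1$ for $(t-1)d_2< x\le td_2$ and $F_2(x)=x+n_1$ on the remainder $ad_2<x\le ad_2+r_2$. Under this layout both maps obey one uniform shift formula across all blocks \emph{including} the remainders (for $F_2$ the remainder case $x+n_1=x+ad_1+r_1$ is just the $t=a+1$ instance of the same formula, and for $F_1$ the bottom block $[1..r_1]$ is the $s=0$ instance), so a value gap of at least $d_1$ (resp.\ $d_2$) forces a strictly larger block index and hence adds at least $d_2$ (resp.\ $d_1$), and the expansion lemma holds at every seam. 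With that one change to the embeddings, your counting argument and two-case distance analysis go through verbatim.
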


As another example, we use Theorem \ref{general} to get the lower bound $P(16,9) \ge P(9,5)*P(7,4) \ge 3,399$, where $a=1$, $9=1*5+4$, $7=1*4+3$, and the best lower bounds known for $P(9,5)$ and $P(7,4)$ are 103 and 33, respectively.

For given $n$ and $d$, Klove et al \cite{Klove10} defined $C=\{ (\pi_1, ... , \pi_n) \in S_n ~|~ \pi_i = i \mod{d}$, for all $i \in [1..n] \}$ and gave the following theorems:

\begin{theorem}(\cite{Klove10})
If $n = ad+b$, where $0 \le b < d$, then $C$
is an $(n,d)$ PA
and $\vline C \vline = ((a+1)!)^b(a!)^{d-b} $.
\end{theorem}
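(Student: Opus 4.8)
The plan is to read the defining congruence structurally: a permutation $\pi$ lies in $C$ exactly when $\pi$ stabilizes each residue class modulo $d$ setwise. First I would partition $[1..n]$ into the classes $R_r = \{\, i \in [1..n] : i \equiv r \pmod d \,\}$ for $r \in \{0,1,\dots,d-1\}$. The condition $\pi(i) \equiv i \pmod d$ says precisely that $\pi$ maps each $R_r$ into $R_r$; since $\pi$ is a bijection and the $R_r$ partition $[1..n]$, this forces $\pi$ to restrict to a permutation of each $R_r$, and conversely any independent choice of a permutation on each class assembles into a member of $C$. Thus $C$ is in bijection with $\prod_r \mathrm{Sym}(R_r)$.

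For the cardinality I would count the class sizes. Writing $n = ad+b$ with $0 \le b < d$, a short check shows that the classes $R_1,\dots,R_b$ each contain $a+1$ elements, while $R_0$ together with $R_{b+1},\dots,R_{d-1}$ — that is, $d-b$ classes in all — each contain $a$ elements. Since the permutations on distinct classes are chosen independently, $|C| = \prod_r |R_r|! = ((a+1)!)^b (a!)^{d-b}$, as claimed. The only delicate point here is the bookkeeping that isolates exactly $b$ classes of size $a+1$, which comes down to the observation that $r + ad \le n$ holds iff $r \le b$.

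It remains to verify $d(C) = d$. For the lower bound, take distinct $\pi, \sigma \in C$ and a position $i$ with $\pi(i) \neq \sigma(i)$; both values lie in $R_{i \bmod d}$, so $\pi(i) \equiv \sigma(i) \pmod d$ and their difference is a nonzero multiple of $d$, whence $d(\pi,\sigma) \ge |\pi(i)-\sigma(i)| \ge d$. For the matching upper bound (in the nontrivial case where some class has at least two elements, so that $C \neq \{\mathrm{id}\}$), I would exhibit a witnessing pair: let $\pi$ be the identity and let $\sigma$ swap the values at two consecutive positions $j$ and $j+d$ of such a class, fixing all other positions. Then $\sigma \in C$, the only nonzero coordinates of the differences equal $d$, and so $d(\pi,\sigma) = d$. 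Combining the two bounds gives $d(C) = d$, so $C$ is an $(n,d)$ PA. I expect the exactness of the distance — producing the close pair rather than merely the $\ge d$ estimate — to be the step most easily overlooked, though it is not technically hard.
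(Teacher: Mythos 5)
Your proof is correct, and there is nothing in the paper to compare it against: the statement is quoted from Kl{\o}ve \emph{et al.}\ \cite{Klove10} without proof, so your argument stands on its own. The route you take is the natural one and is complete: the congruence condition forces $\pi$ to stabilize each residue class setwise, the class-size bookkeeping ($b$ classes of size $a+1$ and $d-b$ of size $a$, totalling $b(a+1)+(d-b)a = ad+b = n$) is right, and the distance argument correctly handles both directions --- every pair differs by a nonzero multiple of $d$ in some coordinate (so $d(C)\ge d$), while the transposition of the values at positions $j$ and $j+d$ within a class of size at least two gives a pair at distance exactly $d$, which is needed because the paper's definition of an $(n,d)$ PA requires $d(C)=d$, not merely $d(C)\ge d$. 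Your caveat about the degenerate case (all classes of size at most one, i.e.\ $n\le d$, where $C=\{\mathrm{id}\}$ and the pairwise minimum is vacuous) is the only boundary issue, and you flag it appropriately; the theorem is implicitly applied in the regime $n>d$ where your witnessing pair exists.
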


\begin{theorem}(\cite{Klove10})
\label{mod}
If $n = ad+b$, where $0 \le b < d$, then $P(n,d) \ge ((a+1)!)^b(a!)^{d-b}$.
\end{theorem}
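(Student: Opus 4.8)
The plan is to derive Theorem~\ref{mod} directly from the preceding theorem together with the definition of $P(n,d)$. The preceding theorem already furnishes a concrete $(n,d)$ PA, namely $C$, with exact cardinality $((a+1)!)^b(a!)^{d-b}$. Since $P(n,d)$ is defined to be the largest cardinality attained by \emph{any} $(n,d)$ PA, the single array $C$ immediately certifies that $P(n,d)\ge |C|=((a+1)!)^b(a!)^{d-b}$. Thus the only work is to invoke the predecessor; no new construction is required.

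If instead one wants a self-contained argument that does not quote the predecessor, I would reconstruct $C$ and verify its two defining properties. First, to see that $C$ has pairwise Chebyshev distance at least $d$: if $\sigma,\pi\in C$ are distinct, pick a coordinate $i$ with $\sigma(i)\ne\pi(i)$; since $\sigma(i)\equiv i\equiv\pi(i)\pmod d$, the difference $\sigma(i)-\pi(i)$ is a nonzero multiple of $d$, whence $|\sigma(i)-\pi(i)|\ge d$ and so $d(\sigma,\pi)\ge d$. (Transposing two positions exactly $d$ apart within a single residue class shows the distance is actually met, so $d(C)=d$ whenever $a\ge 1$.) Second, to count $C$: partition $[1..n]$ into its $d$ residue classes modulo $d$; each member of $C$ acts as an independent bijection within each class, so the number of arrays is the product of the factorials of the class sizes.

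The remaining step is the bookkeeping of class sizes. Writing $n=ad+b$ with $0\le b<d$, exactly $b$ of the residue classes contain $a+1$ elements and the remaining $d-b$ classes contain $a$ elements (one checks $b(a+1)+(d-b)a=ad+b=n$), so the product of factorials is $((a+1)!)^b(a!)^{d-b}$, matching the stated bound. I expect this counting step---correctly identifying which residue classes receive the extra element and assembling the product of factorials---to be the only point requiring care; the distance property and the reduction from the predecessor are routine.
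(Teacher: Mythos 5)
Your proposal is correct and takes exactly the route the paper intends: the paper gives no separate proof of Theorem~\ref{mod}, presenting it as the immediate consequence of the preceding theorem (the array $C$ is an $(n,d)$ PA of the stated cardinality, so $P(n,d)\ge |C|$), and your self-contained reconstruction of $C$ --- the mod-$d$ distance argument and the residue-class count with $b$ classes of size $a+1$ and $d-b$ of size $a$ --- is the standard argument from \cite{Klove10}. One negligible slip in an aside you don't need: your parenthetical claim that $d(C)=d$ whenever $a\ge 1$ fails in the degenerate case $a=1$, $b=0$ (i.e.\ $n=d$), where every residue class is a singleton and $|C|=1$, so no pair of distinct permutations exists to witness the distance.
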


Klove et al \cite{Klove10} gave, as an example, the lower bound $P(2a,2) \ge (a!)^2$. 
They also gave the improvement, using Theorem \ref{recursion2} iteratively, $P(2a,2) \ge \frac{97}{24}(a!)^2$. We give an exact equation for $P(n,2)$. 
Specifically, we show $P(2a,2) = \frac{(2a)!}{2^a}$. 

\begin{theorem} \label{Two}
$P(n,2) = \frac{n!}{2^{\floor{n/2}}}$.
\end{theorem}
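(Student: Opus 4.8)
The plan is to recast the problem in group-theoretic terms and then attack the two bounds separately: a clique-cover argument for the upper bound and an explicit recursive construction for the lower bound.

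First I would record the basic reformulation. Writing permutations as bijections of $[1..n]$ and setting $\rho=\pi\sigma^{-1}$, one has $d(\sigma,\pi)=\max_j|\,j-\rho(j)\,|$, so $d(\sigma,\pi)=1$ precisely when $\rho\neq\mathrm{id}$ satisfies $|\rho(j)-j|\le 1$ for all $j$. Such $\rho$ are exactly the products of pairwise disjoint adjacent transpositions $(j\ \ j{+}1)$. Hence an $(n,2)$-PA is the same thing as a set $A\subseteq S_n$ no two of whose elements differ (by left multiplication on values) by a nonidentity product of disjoint adjacent transpositions; equivalently, an independent set in the Cayley graph of $S_n$ whose connection set is the collection of all such products.

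For the upper bound I would exhibit a partition of $S_n$ into cliques of this graph, each of size $2^{\lfloor n/2\rfloor}$. Let $H=\langle(1\,2),(3\,4),\dots\rangle$ be the subgroup generated by the $\lfloor n/2\rfloor$ disjoint transpositions on consecutive pairs of values; then $|H|=2^{\lfloor n/2\rfloor}$ and every element of $H$ is a product of disjoint adjacent transpositions. For any right coset $H\sigma$ and distinct $\rho_1\sigma,\rho_2\sigma\in H\sigma$, the difference $\rho_1\rho_2^{-1}\in H$ is a nonidentity product of disjoint adjacent transpositions, so $d(\rho_1\sigma,\rho_2\sigma)=1$; thus each of the $n!/2^{\lfloor n/2\rfloor}$ cosets is a clique. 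An $(n,2)$-PA meets each clique at most once, giving $P(n,2)\le n!/2^{\lfloor n/2\rfloor}$.

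For the matching lower bound I would prove the recursion $P(n,2)\ge\binom{n}{2}\,P(n-2,2)$ and unfold it from the base cases $P(1,2)=P(2,2)=1$: a short computation shows $\binom{n}{2}\binom{n-2}{2}\cdots$ telescopes to exactly $n!/2^{\lfloor n/2\rfloor}$ in both parities. Concretely, starting from an optimal $(n-2,2)$-PA I would insert two new symbols into each of its permutations in $\binom{n}{2}$ coordinated ways, producing $\binom{n}{2}\,P(n-2,2)$ permutations of $[1..n]$; equivalently, this amounts to selecting one representative from each $H$-coset so that the chosen transversal is an independent set.

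The hard part is the distance verification for the constructed array, and this is where naive approaches break. Selecting the obvious canonical coset representative (say, ordering each value-pair $\{2i{-}1,2i\}$ by position) does not work: two such representatives can still differ by a single swap of adjacent values across a pair boundary (e.g.\ the transposition $(2\,3)$), which has Chebyshev distance $1$. The same unit-difference obstruction defeats the simplest insertion and ordered-matching constructions already on small cases. Overcoming it requires global coordination of the choices---carrying an auxiliary parity/sign invariant through the induction so that no two retained permutations differ by \emph{any} product of disjoint adjacent transpositions---and I expect verifying that this strengthened, structured induction hypothesis is preserved to be the crux of the argument.
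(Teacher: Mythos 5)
Your upper bound is correct and takes a genuinely different route from the paper. You observe that $d(\sigma,\pi)\le 1$ exactly when $\pi\sigma^{-1}$ is a product of disjoint adjacent transpositions, take the subgroup $H=\langle(1\,2),(3\,4),\dots\rangle$ of order $2^{\floor{n/2}}$, and note that each of the $n!/2^{\floor{n/2}}$ cosets $H\sigma$ is a clique in the ``distance $\le 1$'' graph, so a distance-$2$ PA meets each coset at most once. This is a one-shot, non-inductive clique-cover argument. The paper instead proves the general bound $P(n,d)\le P(n-k,d)\binom{n}{k}$ (Theorem \ref{upperbound}) by partitioning a PA according to the set of positions occupied by the top $k$ symbols, and specializes with $k=d=2$ plus induction (Corollary \ref{upperbound_2}). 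Both are partition-and-count arguments, but yours is sharper and self-contained for $d=2$, while the paper's applies to all $d$ (e.g.\ it yields $P(11,6)\le 462$), which your coset structure does not.

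The lower bound, however, has a genuine gap: you correctly identify the recursion $P(n,2)\ge\binom{n}{2}P(n-2,2)$ as the target and correctly diagnose why naive coset representatives fail (two canonical representatives can differ by a transposition such as $(2\,3)$, which lies in the connection set but not in $H$), but you then only announce that ``global coordination'' via ``an auxiliary parity/sign invariant'' should repair this, without exhibiting the construction or verifying distance. That verification is precisely the crux of the theorem, and the paper resolves it with a purely \emph{local} rule, not a global invariant: in Theorem \ref{recursion_2}, the symbols $a=n-1$ and $b=n$ are inserted into each permutation of an optimal $(n-2,2)$-PA in all $\binom{n}{2}$ position pairs with $a$ preceding $b$, \emph{except} that $a$ and $b$ are swapped whenever the relative order $a,\,n-2,\,b$ would occur. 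The distance check then reduces to one hard case ($\sigma(i)=\tau(i)=b$ with $a$ at distinct positions $j\ne k$, forcing $\tau(j)=\sigma(k)=n-2$), and the swap rule is rigged so that every ordering of $i,j,k$ yields a contradiction. So the difficulty you flag is real, but its resolution needs no strengthened induction hypothesis; as written, your proposal proves only the upper bound, and the lower bound remains a plan rather than a proof.
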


The iterative use of Theorem \ref{recursion2} can be improved further by a generalization of Theorem \ref{1step} using strings of more than one symbol. Let $A$ be a set of length $m$ strings with no repeated symbols (permutations) over $[1..(n+m)]$ with $d(A) \ge d$. 
By an abuse of notation, for each $\sigma \in A$, let $\sigma^C$ denote the complement in $[1..(n+m)]$ of the set of symbols used in $\sigma$. 
As in Theorem \ref{1step}, we show that $P(n + m,d) \ge \sum_{\sigma \in A}   P_d(\sigma^C)$. 
Let $Q((n+m),m,d)$ denote the collection of all sets A of  permutations on a $m$ symbol subset of $[1..(n+m)]$ with $d(A) \ge d$. 
Maximizing the sum over all such sets A yields the following.

\begin{theorem}
\label{prefix}
For any $n\ge d\ge 1, m\ge 1$,
$P(n+m,d) \ge \max_{A \in Q((n+m),m,d)}~ \sum_{\sigma \in A} P_d(\sigma^C)$.
\end{theorem}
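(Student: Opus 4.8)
The plan is to exhibit, for an arbitrary fixed $A \in Q((n+m),m,d)$, an explicit $(n+m,d)$ PA whose cardinality equals $\sum_{\sigma \in A} P_d(\sigma^C)$; taking the maximum over all such $A$ then yields the stated bound. This generalizes the prefix/concatenation idea behind Theorem \ref{1step}, where the prefixes were single symbols.

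First I would, for each string $\sigma \in A$, fix an optimal array $B_\sigma$ of permutations over the alphabet $\sigma^C$ with $d(B_\sigma) \ge d$ and $|B_\sigma| = P_d(\sigma^C)$; such an array exists by the definition of $P_d(\sigma^C)$. For each $\sigma \in A$ and each $\tau \in B_\sigma$, form the length-$(n+m)$ string $\sigma \cdot \tau$ obtained by placing $\sigma$ in the first $m$ positions and $\tau$ in the remaining $n$ positions. Since $\sigma$ uses exactly the $m$ symbols in $[1..(n+m)] \setminus \sigma^C$ and $\tau$ is a permutation of the $n$ symbols in $\sigma^C$, the concatenation $\sigma \cdot \tau$ uses each symbol of $[1..(n+m)]$ exactly once and is therefore a genuine permutation on $[1..(n+m)]$. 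I then set $\mathcal{A} = \{\, \sigma \cdot \tau : \sigma \in A,\ \tau \in B_\sigma \,\}$.

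Next I would check that $(\sigma,\tau) \mapsto \sigma \cdot \tau$ is injective, so that $|\mathcal{A}| = \sum_{\sigma \in A} |B_\sigma| = \sum_{\sigma \in A} P_d(\sigma^C)$: two concatenations with distinct prefixes already differ on the first $m$ coordinates, while two sharing a prefix $\sigma$ but having distinct suffixes $\tau \neq \tau'$ differ on the last $n$ coordinates. The heart of the argument is to show $d(\mathcal{A}) \ge d$ by a two-case analysis on a pair of distinct members $\sigma_1 \cdot \tau_1$ and $\sigma_2 \cdot \tau_2$. If $\sigma_1 = \sigma_2 = \sigma$ (so $\tau_1 \neq \tau_2$ both lie in $B_\sigma$), the two strings agree on the first $m$ positions and restrict to $\tau_1,\tau_2$ on the last $n$; hence their Chebyshev distance equals $d(\tau_1,\tau_2) \ge d(B_\sigma) \ge d$. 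If $\sigma_1 \neq \sigma_2$, then since $d(A) \ge d$ there is a coordinate $i \in [1..m]$ with $|\sigma_1(i) - \sigma_2(i)| \ge d$, and at this same coordinate the two concatenations take the values $\sigma_1(i)$ and $\sigma_2(i)$, forcing their Chebyshev distance to be at least $d$ regardless of the suffixes. Thus $\mathcal{A}$ is an $(n+m,d)$ PA and $P(n+m,d) \ge |\mathcal{A}| = \sum_{\sigma \in A} P_d(\sigma^C)$; maximizing over $A \in Q((n+m),m,d)$ finishes the proof.

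The only point needing care is the second case: one must observe that the distance inherited on the prefix coordinates cannot be destroyed by the suffix, because Chebyshev distance is a maximum over coordinates. This is precisely what lets the single array $A$ of prefixes control all the cross-distances simultaneously, and it is where the hypothesis $d(A) \ge d$ enters. Everything else is bookkeeping about the symbol sets, ensuring that each $\sigma \cdot \tau$ is a well-defined permutation of $[1..(n+m)]$.
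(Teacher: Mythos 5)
Your proof is correct and takes essentially the same approach as the paper's: concatenating each prefix $\sigma \in A$ with an optimal suffix array over the complementary alphabet $\sigma^C$, and verifying $d(\mathcal{A}) \ge d$ by the same two-case analysis (shared prefix handled by the suffix distance, distinct prefixes handled by a prefix coordinate, since Chebyshev distance is a maximum over coordinates). Your write-up is in fact somewhat more careful than the paper's terse version, spelling out injectivity and the case of distinct prefixes with distinct suffixes explicitly, but the underlying argument is identical.
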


In \cite{Klove10} a 3-fold iterative use of 
Theorem \ref{recursion2}, for $d=3$ and $n=5$ gives a set $S \in Q(8,3,3)$ with $\vline S \vline = 18$. 
That is, 
$(\floor{\frac{5}{3}} + 1)( \floor{\frac{6}{3}} + 1)( \floor{\frac{7}{3}} + 1)$ = 18. However, by computation one can obtain a set $T \in Q(8,3,3)$ with $\vline T \vline = 24$. 
Thus, not only can one obtain a larger subset of $[1..(n+m)]$ than the iterative use of Theorem \ref{recursion2}, but also larger sets than $P(n,d)$ by the use of complement alphabets. 
For $m<n$, let $P(n,m,d)$ denote
the maximum cardinality of any set $A$ in $Q(n,m,d)$. 
We have computed several lower bounds for
$P(n,m,d)$. See, for example, Tables \ref{Pnm2-3} and \ref{Pnm4-5} in Section \ref{pref}.

\begin{cor}
\label{prefix2}
For any $n\ge d\ge 1, m\ge 1$,
    $P(n+m,d) \ge P(n+m,m,d) * P(n,d)$.
\end{cor}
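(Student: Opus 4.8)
The plan is to invoke Theorem~\ref{prefix} and then collapse each summand down to $P(n,d)$ by a monotonicity argument on alphabets. First I would fix a set $A \in Q(n+m,m,d)$ of maximum cardinality, so that $|A| = P(n+m,m,d)$. Since $A$ is a legal choice in the maximum of Theorem~\ref{prefix}, that theorem gives
$$P(n+m,d) \ge \sum_{\sigma \in A} P_d(\sigma^C).$$
Each $\sigma \in A$ is a length-$m$ permutation drawn from $[1..(n+m)]$, so its complement $\sigma^C$ is an $n$-element subset of $[1..(n+m)]$. The corollary will follow once I show that every such complement alphabet contributes at least $P(n,d)$.

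The heart of the argument is therefore the claim that $P_d(\Sigma) \ge P(n,d)$ for every $n$-element set $\Sigma \subseteq \Z$. To establish it, I would write $\Sigma = \{a_1 < a_2 < \cdots < a_n\}$ and let $\phi\colon [1..n] \to \Sigma$ be the order-preserving bijection $\phi(k) = a_k$. Because the $a_i$ are distinct integers in increasing order, $a_j - a_i \ge j-i$ whenever $i < j$, and hence $|\phi(x) - \phi(y)| \ge |x - y|$ for all $x,y \in [1..n]$. Now take a maximum $(n,d)$-PA $B$ on $[1..n]$, so $|B| = P(n,d)$, and in each permutation of $B$ replace every value $k$ by $\phi(k) = a_k$. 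This yields a set $B'$ of arrangements of the symbols of $\Sigma$; the map is injective since $\phi$ is a bijection, so $|B'| = |B|$. Comparing two images positionwise and applying the spreading inequality $|\phi(x)-\phi(y)| \ge |x-y|$ at each coordinate shows $d(\pi',\rho') \ge d(\pi,\rho) \ge d$ for all $\pi,\rho \in B$, so $d(B') \ge d$ and thus $P_d(\Sigma) \ge |B'| = P(n,d)$.

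Applying this bound to each $\sigma^C$ and summing then yields
$$P(n+m,d) \ge \sum_{\sigma \in A} P_d(\sigma^C) \ge \sum_{\sigma \in A} P(n,d) = |A|\cdot P(n,d) = P(n+m,m,d)\cdot P(n,d),$$
which is the desired inequality. I expect the one genuinely non-routine step to be the monotonicity claim: the crux is recognizing that relabeling an array by an order-preserving injection can only widen positionwise gaps, so the Chebyshev distance of the relabeled array is at least that of the original, and therefore the ``tightest'' alphabet of a given size is the contiguous block $[1..n]$. Once that is in hand, the remainder is just a substitution into Theorem~\ref{prefix} followed by a summation.
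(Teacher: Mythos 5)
Your proposal is correct and follows essentially the same route as the paper: the paper's proof of Corollary~\ref{prefix2} is the one-line observation that for any $A \in Q((n+m),m,d)$ and any $\sigma \in A$ one has $P_d(\sigma^C) \ge P(n,d)$, which combined with Theorem~\ref{prefix} gives the bound. The only difference is that you explicitly justify this monotonicity claim via the order-preserving relabeling $\phi(k)=a_k$ and the spreading inequality $|\phi(x)-\phi(y)|\ge|x-y|$, a step the paper leaves implicit; your write-up is sound throughout.
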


\begin{proof}
%Also, as easily seen, using Theorem \ref{prefix}, $P(n+m,d) \ge P(n+m,m,d) * P(n,d)$. 
That is, for any set $A \in Q((n+m),m,d)$, and any $\sigma \in A, P_d(\sigma^C) \ge P(n,d)$.   
\end{proof}

%For example, $P(10,5,3) \ge 313$, whereas $(\floor{\frac{5}{3}} + 1)( \floor{\frac{6}{3}} + 1)( \floor{\frac{7}{3}} + 1)( \floor{\frac{8}{3}} + 1)( \floor{\frac{9}{3}} + 1)$ = 216, and $P(15,5,3) \ge 2,620$, whereas $(\floor{\frac{10}{3}} + 1)( \floor{\frac{11}{3}} + 1)( \floor{\frac{12}{3}} + 1)( \floor{\frac{13}{3}} + 1)( \floor{\frac{14}{3}} + 1)$ = 2,000.

%For further calibration of the utility of these methods, we observe that Theorem \ref{mod} gives $P(14,3) \ge 345,600$ and a use of Theorem \ref{prefix} gives $P(14,3) \ge 11,081,916$. %Furthermore, using the 3-fold iterative use of Theorem \ref{recursion2} starting with n=8, d=3, and $P(8,3) \ge 401$ \cite{Klove10}, gives $P(9,3) \ge 3*401 = 1,203$ and our use of Theorem \ref{1step} 
%gives $P(9,3) \ge 1,654$.

We have shown in previous examples that Corollary \ref{prefix2} gives improved lower bounds, by computation, over an iterative use of Theorem \ref{recursion2}. The next theorem show that such improvements  exist even for arbitrarily large n. For example, if $d=5$ and $k=2$, an iterative use of Theorem \ref{recursion2} gives $P(dk+d-1,d) = P(14,5) \ge 
(\floor {\frac{13}{5}}+1)
(\floor {\frac{12}{5}}+1)
(\floor {\frac{11}{5}}+1)(\floor {\frac{10}{5}}+1)
P(10,5) =
3^4P(10,5) = 81P(10,5)$. By Theorem \ref{gen}, $P(dk+d-1) = P(14,5) \ge (3^5 - \binom{6}{4})P(10,5) = 228P(10,5)$.

%A d-fold iterative use of Theorem 1 implies  $P(dk+d-1,d)\ge k (k+1)^{d-1}  P(dk-1,d)$. 
%This bound can be improved in general as follows.

\begin{theorem}
\label{gen}
For any $d\ge 3$ and $k\ge 1$, 
$P(dk+d-1,d)\ge \left((k+1)^d - {k+d-1\choose d-1}\right) P(dk-1,d).$
\end{theorem}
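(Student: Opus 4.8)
The plan is to derive the bound from the prefix method of Corollary~\ref{prefix2}. Taking $m=d$ with a base alphabet of size $dk-1$, that corollary gives $P(dk+d-1,d)\ge P(dk+d-1,d,d)\cdot P(dk-1,d)$, so the whole theorem reduces to the purely combinatorial claim
\[
P(dk+d-1,d,d)\ \ge\ (k+1)^d-\binom{k+d-1}{d-1}.
\]
In other words, I must exhibit at least $(k+1)^d-\binom{k+d-1}{d-1}$ length-$d$ strings on distinct symbols drawn from $[1..(dk+d-1)]$ whose pairwise Chebyshev distance is at least $d$; feeding any such family into Corollary~\ref{prefix2} finishes the proof.

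For the construction I would first build an idealized family on the slightly larger alphabet $[1..d(k+1)]$ and then pay for the one missing symbol. Index candidate strings by $t=(t_1,\dots,t_d)\in\{0,1,\dots,k\}^d$ and set $\sigma_j=j+d\,t_j$. Two distinct such strings differ in some coordinate by a nonzero multiple of $d$, so their Chebyshev distance is at least $d$; and within a single string the coordinate values occupy the residues $1,2,\dots,d-1,0$ modulo $d$, hence the $d$ symbols are automatically distinct. This gives a clean family of exactly $(k+1)^d$ admissible strings, but it uses the symbol $d(k+1)=dk+d$, which lies outside $[1..(dk+d-1)]$. That symbol is forced only by the last coordinate $\sigma_d$ when $t_d=k$.

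The heart of the argument is to absorb this one-symbol deficit while discarding as few strings as possible. Deleting the top symbol outright costs all $(k+1)^{d-1}$ strings with $t_d=k$, which is far more than the target loss $\binom{k+d-1}{d-1}$; and a naive local repair that merely lowers $\sigma_d$ on an offending string fails, because the repaired string then agrees with a surviving string on the first $d-1$ coordinates and cannot be pushed distance $d$ away in the last coordinate alone. I would therefore define the value assignment \emph{globally}, letting the placement of the large coordinates depend on the sum $t_1+\cdots+t_d$, so that the strings which genuinely cannot be realized inside $[1..(dk+d-1)]$ form exactly one ``simplex slice.'' The bookkeeping is designed to match the formula: the number of $t\in\mathbb{Z}_{\ge 0}^d$ with $t_1+\cdots+t_d=k$ is precisely $\binom{k+d-1}{d-1}$, the deficit appearing in the displayed inequality.

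The main obstacle is this global reassignment and the simultaneous verification of both constraints for the modified strings. I expect the genuinely hard part to be the pairwise Chebyshev condition, since shifting coordinates to reuse freed symbols risks bringing two strings within distance $d$; the within-string distinctness is comparatively easy to maintain. I would attempt the verification either by induction on $d$, peeling off one coordinate and invoking the analogous statement for $d-1$, or by writing an explicit shift function indexed by the level sets of $\sum_j t_j$ and checking the far condition directly, closing with a short binomial identity to confirm that exactly $\binom{k+d-1}{d-1}$ strings are sacrificed.
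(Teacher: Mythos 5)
Your reduction is exactly the route the paper takes: its proof applies Theorem \ref{1step} iteratively, which is the prefix method in disguise, and the theorem does come down to the claim $P(dk+d-1,d,d)\ge (k+1)^d-\binom{k+d-1}{d-1}$, after which Corollary \ref{prefix2} (with $m=d$, $n=dk-1$) finishes. Your preliminary observations are also all correct: the product family $\sigma_j=j+dt_j$ has pairwise Chebyshev distance at least $d$ and distinct symbols within each string, deleting the symbol $dk+d$ outright costs $(k+1)^{d-1}$ strings, and no repair confined to the last coordinate of an offending string can work.

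However, the construction you promise is never given, and that construction is the entire content of the theorem. You observe that the slice $\{t:\sum_j t_j=k\}$ has cardinality $\binom{k+d-1}{d-1}$ and propose to sacrifice exactly that slice via a global reassignment ``depending on $t_1+\cdots+t_d$,'' with verification deferred to an unspecified induction on $d$ or an unwritten shift function. That is a numerical coincidence, not an argument: nothing in the proposal shows that the unrealizable strings can be made to coincide with one simplex slice, and the pairwise Chebyshev check that you yourself flag as the hard part is precisely what is absent. For comparison, the paper's deficit has a different combinatorial meaning. It takes $a_i\in A_i=\{d-i,2d-i,\dots,kd+d-i\}$ for the first $d-1$ coordinates, calls the prefix \emph{monotone} when $a_1>a_2>\dots>a_{d-1}$ (there are exactly $\binom{k+d-1}{d-1}$ such prefixes, counting the weakly decreasing vectors $(\lfloor a_1/d\rfloor,\dots,\lfloor a_{d-1}/d\rfloor)$, and \emph{mixed} otherwise), and proves that every mixed prefix admits $k+1$ pairwise-far final symbols while a monotone one is only guaranteed $k$, by Theorem \ref{recursion2}. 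The $k+1$ extensions for a mixed prefix come from an explicit path in the $(k+1)\times(d-1)$ table whose row-$i$, column-$j$ entry is $(i-1)d+j$: each column contains exactly one deleted symbol, namely $a_{d-j}$; one walks down a single column and, at the first ascent $a_j\le a_{j+1}$, shifts one column to the right just before reaching that column's deleted entry, the ascent guaranteeing that the new column's deleted entry lies in a strictly earlier row, with consecutive chosen entries differing by $d$ or $d+1$. So the extension set varies with the prefix rather than being a globally shifted product, and the loss of $\binom{k+d-1}{d-1}$ counts monotone prefixes each losing one extension, not a discarded slice of index vectors. Your target inequality is true (the paper proves it), but to complete your proof you must supply the per-prefix assignment, or an equivalent one, together with the pairwise-distance verification; as written, the proposal stops where the proof begins.
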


As another example of the improvement shown by Theorem \ref{gen} consider the case when $k=3$ and $d=3$. 
The theorem states that $P(11,3) \ge 54 \cdot P(8,3)$, whereas the three fold iterative use of Theorem \ref{recursion2} gives $P(11,3) \ge (\floor{\frac{10}{3}}+1 )\cdot(\floor{\frac{9}{3}}+1)\cdot (\floor {\frac{8}{3}}+1)\cdot P(8,3) = 48\cdot P(8,3)$. By computational methods, we show that $P(11,3,3) \ge 59$ and hence, by Theorem \ref{prefix},
we have $P(11,3) \ge 59 \cdot P(8,3)$. 
In fact, as shown in Table \ref{LowerBounds}, $P(11,3) \ge 53,549$.

 Let $V(n,d)$ be the number of permutations on $\{1,2,\dots,n\}$ within distance $d$ of the identity permutation.

Kl{\o}ve \etal \cite{Klove10} also gave general lower and upper bounds.

\begin{theorem}
\cite{Klove10}
    For $n>d \ge 2$,
$P (n, d) \ge \frac{n!}{V(n,d-1)}$
 
\end{theorem}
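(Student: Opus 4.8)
The plan is to prove this bound by a sphere-covering (Gilbert--Varshamov style) argument, in which the relevant radius is $d-1$ rather than $d$: two permutations must be kept apart in an $(n,d)$ PA precisely when their Chebyshev distance is at most $d-1$, so the objects to count are balls of radius $d-1$.

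First I would record that the Chebyshev metric is invariant under composition on the right, that is $d(\sigma\tau,\pi\tau)=d(\sigma,\pi)$ for every $\tau\in S_n$. This is immediate, since $\max_i|\sigma(\tau(i))-\pi(\tau(i))| = \max_j|\sigma(j)-\pi(j)|$ because $\tau$ merely relabels the index set over which the maximum is taken. Specializing to $\tau=\sigma^{-1}$ shows that the map $\pi\mapsto\pi\sigma^{-1}$ is a distance-preserving bijection of $S_n$ that sends the ball $\{\pi : d(\sigma,\pi)\le d-1\}$ onto the ball $\{\rho : d(\mathrm{id},\rho)\le d-1\}$. Consequently every ball of radius $d-1$ in $S_n$ contains exactly $V(n,d-1)$ permutations, independent of its center; this uniformity is what makes the counting clean.

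Next I would run the greedy construction. Beginning with all of $S_n$, I repeatedly select any permutation $\sigma$ not yet removed, add it to the array $A$, and delete every permutation within Chebyshev distance $d-1$ of $\sigma$ (a set of size at most $V(n,d-1)$ by the previous step). Any two permutations chosen in distinct steps must be at distance at least $d$, since whichever was selected later would already have been deleted when the earlier one was chosen had they been within distance $d-1$; hence $A$ is a valid $(n,d)$ PA. The process halts only once $S_n$ is exhausted, and since each of the $|A|$ selection steps removes at most $V(n,d-1)$ permutations, we get $|A|\cdot V(n,d-1)\ge n!$, so $|A|\ge n!/V(n,d-1)$, and therefore $P(n,d)\ge |A|\ge n!/V(n,d-1)$.

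I do not expect a substantive obstacle in this argument, as each ingredient is elementary. The one point that genuinely demands care is the radius bookkeeping: because minimum distance $d$ forbids all pairwise distances up to and including $d-1$, the correct sphere to cover with is the ball of radius $d-1$, and an off-by-one here (counting balls of radius $d$ instead) would give the wrong denominator. Confirming the right-invariance of the metric so that \emph{all} such balls share the common size $V(n,d-1)$ is the small structural fact on which the whole estimate rests.
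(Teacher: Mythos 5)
Your proof is correct and follows essentially the same route as the source: this theorem is cited by the paper from \cite{Klove10} without a reproof, and the original argument is precisely the Gilbert--Varshamov sphere-covering bound you give. The two points you single out --- right-invariance of the Chebyshev metric (so every ball of radius $d-1$ has exactly $V(n,d-1)$ elements, independent of center) and the radius-$(d-1)$ bookkeeping forced by minimum distance $d$ --- are exactly the ingredients on which that argument rests.
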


\begin{theorem}
\cite{Klove10}
\label{Bound}
For even $d$ and $2d \ge n \ge d \ge 2$,
$P(n,d) \le \frac{(n+1)!}{V(n+1,d/2)},$ 
\end{theorem}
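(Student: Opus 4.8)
The plan is to prove the bound by a sphere-packing argument carried out not in $S_n$ but in $S_{n+1}$, the point being that $d$ is even so that $d/2$ is a genuine integer radius. The first step is to record two structural facts that make $S_{n+1}$ a convenient arena. Right-composition by a fixed $\rho\in S_{n+1}$ is an isometry of the Chebyshev metric, since $d(\sigma\rho,\pi\rho)=\max_i|\sigma(\rho(i))-\pi(\rho(i))|=\max_j|\sigma(j)-\pi(j)|=d(\sigma,\pi)$, and this action of $S_{n+1}$ on itself is transitive; consequently every Chebyshev ball of radius $r$ in $S_{n+1}$ contains exactly $V(n+1,r)$ permutations, independently of its centre. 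Second, appending the symbol $n+1$ in position $n+1$ embeds any $(n,d)$-PA $A$ as a set $\overline A\subseteq S_{n+1}$ with $d(\overline A)=d(A)\ge d$, because the new coordinate contributes $0$ to every pairwise distance. So it suffices to bound a minimum-distance-$d$ code in $S_{n+1}$, and the target inequality is equivalent to $|A|\cdot V(n+1,d/2)\le(n+1)!$.

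The natural attempt is then to pack the radius-$d/2$ balls around the points of $\overline A$: if these were pairwise disjoint, the uniform ball size would give $|A|\cdot V(n+1,d/2)\le(n+1)!$ immediately. This is exactly where the even parity of $d$ is meant to be spent. Disjointness of radius-$d/2$ balls is equivalent to $d(\overline A)\ge d+1$, so one is only ever off by a single unit: two codewords sit at an \emph{even} distance $\ge d$, and any shared point of two balls would have to be an exact Chebyshev midpoint of two codewords at distance precisely $d$, forcing, in every coordinate where they differ by $d$, the integer value halfway between them. The role of the hypotheses $d\le n\le 2d$ should be to control how many such midpoint coordinates can coexist.

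The hard part — and the step I expect to be the genuine obstacle — is that these midpoints are not automatically empty: two lifted codewords at distance exactly $d$ can in fact share a permutation lying in the intersection of their radius-$d/2$ balls, so the rigid lift does not give a valid packing, and indeed for small parameters one checks that $S_{n+1}$ simply does not contain enough pairwise-$(d{+}1)$-separated permutations to carry $\overline A$. The resolution must therefore not be a union of balls around single centres but a direct injection $\Psi\colon A\times B(n+1,d/2)\hookrightarrow S_{n+1}$ into the symmetric group, where $B(n+1,d/2)$ is one fixed ball; for each $\sigma$ the map $\beta\mapsto\Psi(\sigma,\beta)$ is injective, and collisions \emph{across} different $\sigma$ are to be broken using the freedom to place the symbol $n+1$ in a position depending on the codeword. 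I would try to make this explicit and then prove that no element of $S_{n+1}$ is charged twice, which is the crux: given two codewords that would otherwise claim a common midpoint, one shows they are separated by the differing location of $n+1$. I expect to establish this by a Hall-type matching argument on the coordinate set where two codewords attain displacement $d$, using that $d$ is even to pin down the forced midpoint values exactly and using $n\le 2d$ to guarantee that the $(n+1)$-st coordinate always leaves room to realise the separation; verifying that this separation is always achievable in the full range $d\le n\le 2d$ is where the real work lies.
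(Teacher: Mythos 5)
Your setup is sound and your diagnosis of the obstacle is accurate: right-invariance of the Chebyshev metric does give every ball in $S_{n+1}$ the uniform size $V(n+1,r)$; the rigid lift (appending the symbol $n+1$ in the last position) preserves distance exactly $d$; and for even $d$ two radius-$d/2$ balls about permutations at distance exactly $d$ can indeed share a midpoint, so that lift does not pack. But at the crux the proposal stops. The injection $\Psi$ is never defined, the ``Hall-type matching argument'' that is supposed to break collisions across codewords is only announced, and you say yourself that verifying the separation throughout the range $d\le n\le 2d$ ``is where the real work lies.'' That step carries all of the content of the theorem, so what you have is a plan with the key lemma missing, not a proof.

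The plan also misdiagnoses what the fix must look like. You assert that the resolution ``must not be a union of balls around single centres,'' but it is exactly that, around better-chosen centres, and no codeword-dependent placement of the new symbol is needed. The missing ingredient is the insertion lemma of Kl{\o}ve et al., reproduced in this paper as Theorem \ref{thm1} (see also Theorem \ref{diag}): if $A$ is an $(n,d)$-PA with $n\le 2d$, then applying to every codeword the \emph{same} map --- prepend one fixed new symbol of intermediate value (about $d$) and shift every value exceeding it up by one --- yields an $(n+1,d+1)$-PA of the same size. The proof is one line, and $n\le 2d$ enters exactly where you expected it to: at any coordinate $i$ witnessing $|\sigma(i)-\tau(i)|\ge d$, the smaller of the two values is at most $n-d\le d$ and is left fixed, while the larger is at least $d+1$ and is shifted up, so the gap grows to at least $d+1$. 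Once the lifted code has minimum distance $d+1$ in $S_{n+1}$, the radius-$d/2$ balls are genuinely disjoint since $2\cdot(d/2)=d<d+1$, and your own counting finishes the argument: $P(n,d)\le P(n+1,d+1)\le (n+1)!/V(n+1,d/2)$. (Note that the paper quotes Theorem \ref{Bound} from \cite{Klove10} without proof, so the comparison here is with the argument of Kl{\o}ve et al.) Incidentally, your side remark that $S_{n+1}$ ``does not contain enough pairwise-$(d{+}1)$-separated permutations'' to accommodate the code is false in the stated range $d\le n\le 2d$ --- the lifted array is precisely such a set; the deficiency you observed belongs only to your specific rigid lift, not to $S_{n+1}$.
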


In Theorem \ref{upperbound} we give a better upper bound.  Using Theorem \ref{upperbound} we show, for example, $P(11,6) \le 462$.
Kl{\o}ve \cite{Klove11} also proved  lower bounds on the size of spheres of permutations under the Chebyshev distance.

\begin{theorem} \label{upperbound}
For $1 \leq k \leq d < n$,
\begin{align*}
P(n,d) \le P(n-k, d) \cdot \binom{n}{k}.    
\end{align*}
\end{theorem}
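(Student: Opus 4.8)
The plan is to take a maximum $(n,d)$-PA $A$, so that $|A|=P(n,d)$, partition it into at most $\binom{n}{k}$ classes according to a $k$-subset invariant, and bound the size of each class by $P(n-k,d)$ via a reduction to permutations on $n-k$ symbols that cannot shrink distances below $d$.

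For the invariant, for each $\sigma\in A$ let $T(\sigma)\subseteq[1..n]$ be the set of positions occupied by the $k$ largest values $\{n-k+1,\ldots,n\}$; that is, $T(\sigma)=\{\sigma^{-1}(n-k+1),\ldots,\sigma^{-1}(n)\}$. This is a $k$-element subset of $[1..n]$, so $A$ splits into at most $\binom{n}{k}$ classes $A_T=\{\sigma\in A:T(\sigma)=T\}$. Fix a class $A_T$. Every $\sigma\in A_T$ assigns the values $\{1,\ldots,n-k\}$ to the positions of $[1..n]\setminus T$, so I would restrict $\sigma$ to those positions and relabel them by the unique order-isomorphism $\rho_T\colon [1..n]\setminus T\to[1..n-k]$, leaving the values untouched. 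This yields a genuine permutation $\tau_\sigma\in S_{n-k}$ given by $\tau_\sigma(\rho_T(a))=\sigma(a)$ for $a\notin T$.

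The crux is that this reduction is distance non-decreasing on each class. Since $\rho_T$ is fixed throughout $A_T$, the kept positions are matched up identically for all members, so for $\sigma,\pi\in A_T$ the value difference at each kept position is preserved exactly and $d(\tau_\sigma,\tau_\pi)=\max_{a\notin T}|\sigma(a)-\pi(a)|$. The discarded positions carry only the top $k$ values, whose pairwise differences are at most $n-(n-k+1)=k-1\le d-1<d$; hence $\max_{a\in T}|\sigma(a)-\pi(a)|\le d-1$. Because $d(\sigma,\pi)=\max\bigl(\max_{a\in T}|\sigma(a)-\pi(a)|,\ \max_{a\notin T}|\sigma(a)-\pi(a)|\bigr)\ge d$, the separation must be attained outside $T$, giving $d(\tau_\sigma,\tau_\pi)\ge d$. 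In particular distinct members of $A_T$ receive distinct images, so $\{\tau_\sigma:\sigma\in A_T\}$ is an $(n-k,d)$-PA and $|A_T|\le P(n-k,d)$.

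Summing over the at most $\binom{n}{k}$ classes then yields $P(n,d)=|A|\le\binom{n}{k}\,P(n-k,d)$; note $n-k\ge n-d\ge1$, so $P(n-k,d)$ is well defined (and equals $1$ when $d\ge n-k$, which recovers the example $P(11,6)\le\binom{11}{6}=462$ at $k=d=6$). The one place demanding care is the distance step of the third paragraph: it relies essentially on the hypothesis $k\le d$, which forces the differences among the $k$ deleted largest values to stay strictly below $d$, so that those coordinates can never by themselves account for the required Chebyshev separation.
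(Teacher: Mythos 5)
Your proof is correct and takes essentially the same approach as the paper: both partition the array by the $k$-element set of positions occupied by the top $k$ values $\{n-k+1,\dots,n\}$, note that since $k\le d$ these values pairwise differ by at most $k-1<d$ so the Chebyshev separation within a class must be achieved on the remaining $n-k$ symbols, and contract each class to a PA on $n-k$ symbols with distance $d$, giving at most $P(n-k,d)$ permutations per class and at most $\binom{n}{k}$ classes. Your write-up merely makes explicit the order-isomorphic relabeling and the injectivity that the paper's contradiction-style phrasing leaves implicit.
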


In \cite{Klove10} there is also the following interesting theorem.

\begin{theorem}\label{fixed} \cite{Klove10}~
For fixed $r$, there exist constants $c_r$ and $d_r$ such that $P(d+r,d) = c_r$, for $d \ge d_r$.
\end{theorem}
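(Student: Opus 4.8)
The plan is to show that, for fixed $r$, the integer sequence $d\mapsto P(d+r,d)$ is non-decreasing in $d$ and bounded above by a constant depending only on $r$; a non-decreasing integer sequence that is bounded above is eventually constant, which is exactly the claim (the case $r=0$ is trivial since $P(d,d)=1$, so assume $r\ge 1$). The starting observation is that only the $2r$ \emph{extreme} values matter. For permutations on $[1..(d+r)]$, if $|\sigma(i)-\pi(i)|\ge d$ then, since the values lie in $\{1,\dots,d+r\}$, one of them lies in $S=\{1,\dots,r\}$ and the other in $L=\{d+1,\dots,d+r\}$; every ``middle'' value in $\{r+1,\dots,d\}$ differs from every value in range by at most $d-1$. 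Writing the small value as $a\in[1..r]$ and the large value as $d+b$ with $b\in[1..r]$, we have $(d+b)-a\ge d$ iff $b\ge a$. Hence $d(\sigma,\pi)\ge d$ holds iff there is a coordinate $i$ with $\{\sigma(i),\pi(i)\}=\{a,d+b\}$ and $b\ge a$. In particular, whether a pair is at distance $\ge d$ depends only on where the two permutations place the extreme values (their ``signature''), and only on $r$ and on $n=d+r$, not on $d$ directly.

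First I would establish monotonicity by an explicit size-preserving injection from $(d+r,d)$-PAs to $((d+1)+r,\,d+1)$-PAs. Given $\sigma$ on $[1..(d+r)]$, define $\sigma'$ on $[1..(d+r+1)]$ by $\sigma'(i)=f(\sigma(i))$ for $i\le d+r$, where $f(x)=x$ for $x\le d$ and $f(x)=x+1$ for $x\ge d+1$, and set $\sigma'(d+r+1)=d+1$. Since $f$ maps $[1..(d+r)]$ bijectively onto $[1..(d+r+1)]\setminus\{d+1\}$, each $\sigma'$ is a permutation and $\sigma\mapsto\sigma'$ is injective. If a pair $\sigma,\pi$ realizes distance $\ge d$ at coordinate $i$ via $\sigma(i)=a$, $\pi(i)=d+b$ with $b\ge a$, then after the map $\sigma'(i)=a$ and $\pi'(i)=d+b+1$, so $|\sigma'(i)-\pi'(i)|=d+b+1-a\ge d+1$, while the appended coordinate contributes $0$. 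Thus $d(\sigma',\pi')\ge d+1$, and the image is a valid $((d+1)+r,\,d+1)$-PA of the same size, giving $P(d+r,d)\le P\big((d+1)+r,\,d+1\big)$.

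The main obstacle is the uniform upper bound $P(d+r,d)\le B_r$ with $B_r$ independent of $d$. Here I would work entirely with the crossing reformulation above. Two permutations with identical signatures share the same coordinates for all extreme values, so no small/large collision occurs between them and they are at distance $<d$; hence every PA consists of permutations with pairwise distinct signatures, and a maximum PA is a maximum clique in the ``crossing graph'' on signatures. To bound this clique uniformly in $n=d+r$, I would fix one codeword $\sigma_0$, occupying $2r$ special coordinates, and note that every other codeword must place an extreme value on one of these $2r$ coordinates with the index condition $b\ge a$. The collision pattern between any two signatures is a matching-type structure on their $2r$ tokens, so there are only finitely many such patterns, $T(r)$, depending on $r$ alone. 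Applying Ramsey's theorem to this finite colouring of pairs, a sufficiently large PA would contain a large subfamily in which all pairs exhibit the same pattern; analyzing such a homogeneous family forces a contradiction, exactly as in the case $r=1$, where the crossing graph is ``head-to-tail'' adjacency of the $(\text{position of }1,\ \text{position of }d+1)$ pairs and a short case analysis caps any clique at $3$ (matching $P(3,2)=3$). Carrying out the homogeneous-family analysis for general $r$, and thereby extracting the explicit constant, is the crux of the argument.

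Finally, combining the two parts, $d\mapsto P(d+r,d)$ is a non-decreasing, integer-valued sequence bounded above by $B_r$, so it can increase only finitely often. Hence there exist $c_r=\sup_{d}P(d+r,d)$ and a threshold $d_r$ with $P(d+r,d)=c_r$ for all $d\ge d_r$, as required.
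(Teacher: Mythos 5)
Your monotonicity half is fine: the crossing characterization ($|\sigma(i)-\pi(i)|\ge d$ on $[1..(d+r)]$ forces $\{\sigma(i),\pi(i)\}=\{a,\,d+b\}$ with $a,b\in[1..r]$ and $b\ge a$) is correct, and your injection appending the symbol $d+1$ after shifting the top values up by one is exactly the $\phi$-map of Kl{\o}ve \etal; it reproduces Theorems \ref{thm1} and \ref{diag} (valid once $d\ge r$, which suffices since only large $d$ matters). But the other half --- the uniform bound $P(d+r,d)\le B_r$ with $B_r$ independent of $d$ --- \emph{is} the entire content of the theorem, and your proposal explicitly defers it: you reduce via Ramsey to showing that, for each of the finitely many collision patterns, homogeneous families have bounded size, and then state that this analysis ``is the crux of the argument'' without carrying it out. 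That is a genuine gap, not a routine verification: for general $r$ a pattern may involve several simultaneous token collisions chained through the order constraint $b\ge a$, and nothing in the $r=1$ head-to-tail case analysis tells you such homogeneous cliques cannot be large; indeed, proving boundedness for every pattern is essentially equivalent to the theorem itself. A sketch whose central lemma is announced but unproved does not establish the statement.

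For comparison, note that the paper does not prove this theorem either --- it quotes it from \cite{Klove10} --- but the closest machinery it does prove, Theorem \ref{upperbound2}, shows how the known argument runs, and it is quite different from your Ramsey route: one counts \emph{potent} symbols (the $2k$ values in $[1..k]\cup[n-k+1..n]$ for $n=d+r$, $k=r$). In a minimal counterexample on $n$ symbols, if some position is touched by potent symbols of only few permutations, one swaps those potent symbols into another column and deletes the position, obtaining an equally large PA on $n-1$ symbols with the same distance structure, contradicting minimality of $n$; otherwise every one of the $n$ positions carries at least $z=1+\lfloor n_0/(2k-1)\rfloor$ potent symbols, and since the whole array contains only $2k(m+1)$ potent symbols the count $2k(m+1)\ge nz$ fails for large $n$. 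This contraction-plus-pigeonhole argument both closes the step you left open and yields explicit constants (e.g.\ $c_2=10$, $d_2=3$ in Corollary \ref{ten}), which your approach, even if completed, would not, since Ramsey numbers for $T(r)$ colours are astronomically large. If you want to salvage your write-up, replace the homogeneous-family step with this potent-symbol contraction.
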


Moreover, an upper bound on the constants $c_r$ and $d_r$ is given in \cite{Klove10}. The proof uses the concept of $potent$ symbols. Basically, an integer is potent for Chebyshev distance d if there is another integer, say j, in the given alphabet, such that $\vline j - i \vline \ge d$.  That is, the symbol can be used in permutations to achieve distance d.

\begin{dfn}
If A is a PA on $d+r$ symbols with Chebyshev distance $d$, then the integers  \\
1,2, ... ,r and d+1,d+2, ... ,d+r are potent. 
\end{dfn}

The following theorem provides improved upper bounds for the constants $c_r$ and $d_r$ of Theorem \ref{fixed}.

\begin{theorem} \label{upperbound2}
Suppose that $P(n_0,n_0-k)\le m$ such that 
\begin{equation} \label{condition}
2k(m+1) < (n_0+1)(1+\lfloor n_0/(2k-1)\rfloor).
\end{equation}
Then $P(n,n-k)\le m$, for all $n\ge n_0$.
\end{theorem}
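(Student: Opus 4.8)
The plan is to prove the statement by induction on $n$, with base case $n=n_0$ furnished by the hypothesis $P(n_0,n_0-k)\le m$. The feature that makes the induction self-sustaining is that the right-hand side of \eqref{condition} is nondecreasing in $n_0$: replacing $n_0$ by $n_0+1$ only increases $(n_0+1)(1+\floor{n_0/(2k-1)})$, while the left-hand side $2k(m+1)$ is unchanged. Hence if \eqref{condition} holds at $n_0$ it holds at every $n\ge n_0$, and it suffices to establish the single inductive step: assuming both $P(n,n-k)\le m$ and $2k(m+1)<(n+1)(1+\floor{n/(2k-1)})$, deduce $P(n+1,n+1-k)\le m$. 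Note that $P(n+1,n+1-k)$ is the quantity $P(d'+k,d')$ with $d'=n+1-k$, so this is exactly the regime of Theorem~\ref{fixed}, and the argument should be read as a quantitative version of why these numbers stabilize.

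For the inductive step I would fix a PA $A$ on $[1..(n+1)]$ with $d(A)\ge d':=n+1-k$ and set $d:=n-k=d'-1$. The first ingredient is the way potency controls distance. A column can contribute a difference of at least $d'$ only between a value $u\le k$ and a value $w\ge d'+1$, and then precisely when $w-u\ge d'$; writing the large potent values as $w=n+1-k+j$ with $j\in\{1,\dots,k\}$, this reads simply as: a small value $u$ aligned against a large value $j$ certifies distance iff $j\ge u$. Thus $d(\sigma,\pi)\ge d'$ exactly when some column aligns a small potent value of one permutation against a large-enough potent value of the other. The second ingredient is a reduction lemma: for any value $v$ and position $p$, the subfamily $A_{v,p}=\{\sigma\in A:\sigma(p)=v\}$ restricts on the remaining $n$ columns to an array over the $n$-element value set $[1..(n+1)]\setminus\{v\}$, and under the order-preserving relabeling of those values to $[1..n]$ every pairwise difference drops by at most one (only value pairs straddling $v$ lose anything). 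Since the distance $\ge d'$ is realized off column $p$, the relabeled family has Chebyshev distance $\ge d'-1=n-k$, is made of distinct permutations of $[1..n]$, and therefore $|A_{v,p}|\le P(n,n-k)\le m$ by the inductive hypothesis.

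With these tools the goal is to show $|A|\le m$. The reduction lemma alone only caps each single $(v,p)$-slot by $m$, so the heart of the matter is to combine these local caps with the global pairwise-far structure. I would do this by charging: each permutation carries $2k$ potent symbols, giving $2k\,|A|$ potent incidences spread over the $n+1$ columns, and pairwise-farness forces the small and large potent symbols of distinct permutations to interleave across columns rather than collapse onto a few slots (in the toy case $k=1$ this already forces the positions of the value $1$ and of the value $n+1$ to be all distinct, and the far relation becomes head-to-tail adjacency of the resulting arcs, capping $|A|$ at $3$). The quantity $1+\floor{n/(2k-1)}$ is meant to be exactly the per-column capacity permitted by this interleaving, the denominator $2k-1$ reflecting that fixing one of a permutation's $2k$ potent symbols leaves $2k-1$ others that must be routed so as not to recreate a forbidden alignment. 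If $|A|\ge m+1$, the inequality \eqref{condition} is precisely what guarantees that the $n+1$ columns do not supply enough alignment capacity to keep all $\binom{|A|}{2}$ pairs simultaneously far; extracting from this a pair at distance $<d'$ gives the contradiction, so $|A|\le m$.

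The hard part will be the capacity estimate in the third paragraph — turning ``pairwise-farness forces interleaving'' into the exact per-column bound $1+\floor{n/(2k-1)}$. The remaining pieces are routine: the monotonicity of \eqref{condition}, the alignment characterization $j\ge u$, and the relabeling reduction $|A_{v,p}|\le m$ all follow directly. The delicate step is a simultaneous use of three constraints — the alignment rule $j\ge u$, the slot multiplicity $\le m$ coming from the inductive hypothesis, and the pigeonhole geometry of placing $2k$ potent symbols per permutation into $n+1$ columns — to bound how many permutations can route their potent symbols through a common column without two of them failing to be far. I expect the main effort to be in pinning the constant to $2k-1$ rather than a weaker $2k$, which is what ultimately sharpens the bounds on the constants $c_r$ and $d_r$ of Theorem~\ref{fixed}.
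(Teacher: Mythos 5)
Your two worked-out ingredients (the alignment rule $j\ge u$ and the slot bound $|A_{v,p}|\le m$) are correct but peripheral, and the step you explicitly defer — the ``capacity estimate'' — is not a hard-but-routine remainder: it is the entire theorem, and your sketch of it points the counting in the wrong direction. You treat $z=1+\lfloor n_0/(2k-1)\rfloor$ as a per-column \emph{maximum} (``the per-column capacity permitted by this interleaving'') and want inequality \eqref{condition} to say that $n+1$ columns cannot supply enough far-certificates for all $\binom{m+1}{2}$ pairs. But \eqref{condition} asserts that the total supply of potent incidences, $2k(m+1)$, is \emph{small} compared to (number of columns)$\times z$; a per-column ceiling of $z$ is perfectly consistent with that and yields no contradiction (note also that the pair count $\binom{m+1}{2}$ appears nowhere in \eqref{condition}). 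The inequality can only bite if $z$ is a forced per-column \emph{minimum}: if every column must contain at least $z$ potent entries among the $m+1$ permutations, then totaling gives $2k(m+1)\ge nz\ge (n_0+1)\bigl(1+\lfloor n_0/(2k-1)\rfloor\bigr)$, directly contradicting \eqref{condition}. Establishing that minimum is precisely what is missing from your proposal, and no interleaving heuristic of the kind you describe produces it — a column with few or even zero potent symbols is entirely compatible with pairwise farness, since the certificates can live in other columns.

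The paper's mechanism for the minimum is a minimal counterexample combined with a column-merging contraction, and this is the idea your plan lacks. Take $n>n_0$ smallest with a PA $A$ of $m+1$ permutations at distance $n-k$, and suppose some column (say column $1$) has at most $z-1$ potent entries, belonging to permutations $\pi_1,\dots,\pi_{k_1}$ with $k_1\le z-1$. These permutations carry at most $(2k-1)(z-1)<n$ potent symbols outside column $1$, so by pigeonhole some column $j>1$ contains no potent symbol of any $\pi_i$, $i\le k_1$. Swapping each such $\pi_i$'s potent symbol from column $1$ into column $j$, then deleting the (now non-potent) column-$1$ symbols and relabeling, yields $m+1$ permutations on $n-1$ symbols still at the required distance — contradicting minimality of $n$. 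This is where the induction actually does its work, and it is where the constant $2k-1$ comes from: fixing the one potent symbol in column $1$ leaves $2k-1$ others per offending permutation to be dodged. Your slot-reduction lemma ($|A_{v,p}|\le m$ via fixing a value at a position and order-preserving relabeling) is a different and strictly weaker reduction — it never engages $z$ or the factor $2k-1$, and as you concede it only yields $|A|\le (n+1)\,m$-type bounds. So as written the proposal does not contain a proof: the framing (induction, monotonicity of \eqref{condition}, the alignment characterization) is sound, but the core counting step must be replaced by the minimum-per-column/merging argument or an equivalent.
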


As an example, Theorem \ref{upperbound2} can be used to show that the constants $c_2,d_2$ in Theorem \ref{fixed} are $d_2 = 3$ and $c_2 = 10$. 

\begin{cor}\label{ten}
$P(n,n-2) = 10$, for all $n\ge 5$.
\end{cor}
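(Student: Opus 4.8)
The plan is to prove the two matching inequalities $P(n,n-2)\le 10$ and $P(n,n-2)\ge 10$ for every $n\ge 5$. The upper bound is where Theorem~\ref{upperbound2} does the heavy lifting, while the lower bound I would handle by an explicit construction that works uniformly in $n$.

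For the lower bound, I would first observe that for distance $d=n-2$ on $[1..n]$ the only pairs of values whose difference is at least $n-2$ are $\{1,n-1\}$, $\{1,n\}$, and $\{2,n\}$; in particular only the four \emph{potent} symbols $1,2,n-1,n$ can ever witness the required distance. This suggests fixing the ``middle'' symbols, i.e.\ setting $\sigma(i)=i$ for all $i\in\{3,\dots,n-2\}$, and letting the permutations of the array differ only in how the four symbols $\{1,2,n-1,n\}$ are placed in the four positions $\{1,2,n-1,n\}$. Since two such permutations agree on the middle, their Chebyshev distance equals the distance of their restrictions to these four positions, and the set of far pairs among $\{1,2,n-1,n\}$ (namely $\{1,n-1\},\{1,n\},\{2,n\}$) has exactly the same combinatorial type for every $n\ge 5$. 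Hence it suffices to exhibit, once and for all, a family of $10$ bijections from four positions to four symbols that pairwise realize one of these three far pairs; relabelling then yields $10$ permutations at pairwise distance $\ge n-2$ for every $n\ge 5$, giving $P(n,n-2)\ge 10$. The case $n=4$ is genuinely different, which is consistent with $P(4,2)=6$ and with $d_2=3$ in Theorem~\ref{fixed}.

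For the upper bound I would invoke Theorem~\ref{upperbound2} with $k=2$ and $m=10$. Condition~\eqref{condition} then reads $2\cdot 2\cdot(10+1)<(n_0+1)(1+\lfloor n_0/3\rfloor)$, i.e.\ $44<(n_0+1)(1+\lfloor n_0/3\rfloor)$; checking successive values shows the smallest admissible base is $n_0=11$, where the right-hand side equals $12\cdot 4=48$. Thus, once I establish the single base estimate $P(11,9)\le 10$, Theorem~\ref{upperbound2} immediately yields $P(n,n-2)\le 10$ for all $n\ge 11$. It then remains to confirm $P(n,n-2)\le 10$ in the finite range $5\le n\le 10$, which I would do by direct search; combined with the lower bound this gives $P(n,n-2)=10$ throughout $5\le n\le 10$ as well.

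The main obstacle is the base estimate $P(11,9)\le 10$ together with the finitely many small cases $5\le n\le 10$: these are the only places where a genuine (computer-assisted) search over arrangements of the potent symbols is needed, since the potent-symbol reduction above controls which pairs can witness the distance but does not by itself bound the number of admissible potent arrangements in an \emph{arbitrary} array, where the potent symbols are free to occupy different positions in different permutations. Everything else is bookkeeping: the lower-bound family of $10$ patterns is found once and reused for all $n$, and the passage past $n_0=11$ is supplied wholesale by Theorem~\ref{upperbound2}.
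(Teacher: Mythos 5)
Your upper-bound half is essentially the paper's own argument: the same instantiation of Theorem~\ref{upperbound2} with $k=2$, $m=10$, $n_0=11$ (where $2k(m+1)=44<48=(n_0+1)(1+\lfloor n_0/3\rfloor)$), anchored by a computed base case $P(11,9)\le 10$ and a finite search for the small $n$ (the paper does $5\le n\le 11$ by the clique approach). But your lower-bound construction has a genuine gap: the ``family of $10$ bijections'' you want to exhibit once and for all does not exist. Relabel the four potent symbols order-preservingly, $1\mapsto 1$, $2\mapsto 2$, $n-1\mapsto 3$, $n\mapsto 4$: your three far pairs $\{1,n-1\},\{1,n\},\{2,n\}$ become $\{1,3\},\{1,4\},\{2,4\}$, which are exactly the pairs of $[1..4]$ at difference at least $2$, while the non-far pairs $\{1,2\},\{2,n-1\},\{n-1,n\}$ become the pairs at difference $1$. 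So a set of middle-fixing permutations with pairwise Chebyshev distance at least $n-2$ is, after relabeling, precisely a $(4,2)$ PA, and $P(4,2)=6$. Your construction therefore caps at $6$ permutations, not $10$ --- ironically, the very value $P(4,2)=6$ you cite as the ``different'' case $n=4$ is the obstruction for all $n\ge 5$. Equivalently: in any $10$-element PA on $[1..5]$ with distance $3$, the non-potent symbol $3$ must occupy at least two different positions across the array, which is exactly the freedom you forfeit by pinning $\sigma(i)=i$ on the middle. (Your observation that only potent symbols can witness the distance is correct, but it does not force the potent symbols into a common set of four positions across all permutations.)

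The repair is the paper's route: establish $P(5,3)\ge 10$ by explicit computation --- the paper notes that \cite{Klove10} claimed $P(5,3)\le 9$ due to a computational error, a hint that no clean uniform pattern is available here --- and then propagate up the diagonal using Theorem~\ref{diag}: since $d<n\le 2d$ holds for $(n,d)=(n,n-2)$ whenever $n\ge 4$, one gets $P(n+1,n-1)\ge P(n,n-2)$, hence $P(n,n-2)\ge 10$ for all $n\ge 5$ by induction. Note that the insertion map $\phi_d$ behind Theorem~\ref{thm1}, which drives this step, inserts a new symbol and shifts others, so the resulting arrays do move the non-potent symbols across positions, consistent with the obstruction above. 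With your lower bound replaced by this argument (or any other that lets the non-potent symbols vary), the rest of your outline matches the paper and goes through.
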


As part of the proof of  Corollary \ref{ten}, we have computed a PA A on [1..5] with $d(A)=3$, so $P(5,3) \ge 10$. In \cite{Klove10},  $P(5,3) \le 9$ was claimed, but was apparently due to a computational error. 

Theorem \ref{upperbound2} can also be used to show improved bounds for $c_r$ and $d_r$, for $r \ge 3$. For example, by Theorem \ref{upperbound}, we have $P(n,n-3) \le P(n-1,n-3) \cdot
\binom{n}{1} = 10 \cdot n$, for all $n \ge 6$. Observe that, for $k=3$, $n_0=295$, and $m=2950$, the inequality of Equation (1) is true. So, $P(n,n-3) \le 2{,}950$, for all $n \ge 295$. Thus, $c_3 \le 2{,}950$ and $d_3 \le 295$, which improves the bounds $c_3 \le 46{,}080$ and $d_3 \le 230{,}401$ given in \cite{Klove10}.

In \cite{Klove10} a few additional recursive constructions were described to obtain lower bounds for $P(n,d)$. For example, for any permutation $\sigma \in S_n$ and any $m~ (1 \le m \le n)$, define $\phi_m(\sigma) = (m,\pi_1,\pi_2, ... , \pi_n)$, where:

$\pi_i = \sigma_i$,        if~$i < m$, and

$\pi_i = \sigma_{i}+1$,   if~ $i \ge m$.

For any PA $A$ and symbols
$1 \le s_1 < s_2 < ... < s_t \le n+1$, define $A[s_1,s_2, ... , s_t]$ to be $ \{ \phi_m(\sigma)~| ~\sigma \in A,~ m \in \{ s_1,s_2, ... , s_t \} \}$

\begin{theorem} (\cite{Klove10})
\label{n/d}
If $A$ is an $(n,d)$ PA of size $M$ and
$s_j + d \le s_{j+1}$, for $1 \le j \le t$-1,
then $A[s_1,s_2, ... ,s_t]$ is an (n + 1,d) PA of size $tM$.

\end{theorem}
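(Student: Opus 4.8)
The plan is to prove the two assertions separately: that $A[s_1,\dots,s_t]$ has exactly $tM$ members, and that any two distinct members lie at Chebyshev distance at least $d$. The unifying observation I would record first is that, for a fixed leading symbol $m$, the operator $\phi_m$ acts on the tail through the order-preserving ``gap insertion'' map $f_m\colon[1..n]\to[1..(n+1)]$ defined by $f_m(v)=v$ for $v<m$ and $f_m(v)=v+1$ for $v\ge m$, so that $\phi_m(\sigma)=(m,\,f_m(\sigma_1),\dots,f_m(\sigma_n))$. Since $f_m$ is a bijection of $[1..n]$ onto $[1..(n+1)]\setminus\{m\}$ and the leading coordinate supplies the missing symbol $m$, each $\phi_m(\sigma)$ is a genuine permutation of $[1..(n+1)]$; this is the point to verify before anything else.

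For the size, I would argue that the map $(\sigma,m)\mapsto\phi_m(\sigma)$ is injective. The leading coordinate of $\phi_m(\sigma)$ equals $m$, so $m$ is recoverable, and applying $f_m^{-1}$ to the remaining $n$ coordinates recovers $\sigma$. As $\sigma$ ranges over the $M$ distinct members of $A$ and $m$ over the $t$ distinct values $s_1<\dots<s_t$, this gives $\lvert A[s_1,\dots,s_t]\rvert=tM$.

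For the distance, I would take two distinct members $\phi_m(\sigma)$ and $\phi_{m'}(\sigma')$ and split on their leading symbols. If $m\ne m'$, then because $m,m'\in\{s_1,\dots,s_t\}$ and the hypothesis $s_j+d\le s_{j+1}$ telescopes to $s_k-s_j\ge(k-j)d\ge d$ for $k>j$, the leading coordinate alone yields $\lvert m-m'\rvert\ge d$, so $d(\phi_m(\sigma),\phi_{m'}(\sigma'))\ge d$ immediately. If $m=m'$ (hence $\sigma\ne\sigma'$), the leading coordinates agree and I must instead compare the tails $f_m(\sigma_i)$ against $f_m(\sigma'_i)$.

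The crux, and the only step that is not pure bookkeeping, is the claim that $f_m$ never contracts distances: $\lvert f_m(a)-f_m(b)\rvert\ge\lvert a-b\rvert$ for all $a,b\in[1..n]$. This holds because $f_m$ is strictly increasing and, for $a<b$, the difference $f_m(b)-f_m(a)$ equals $b-a$ plus the number of times the inserted gap at $m$ is crossed, which is $0$ or $1$; hence $f_m(b)-f_m(a)\in\{\,b-a,\ (b-a)+1\,\}$. Granting this, $\max_i\lvert f_m(\sigma_i)-f_m(\sigma'_i)\rvert\ge\max_i\lvert\sigma_i-\sigma'_i\rvert=d(\sigma,\sigma')\ge d$, so again the distance is at least $d$. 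Combining the two cases shows $d(A[s_1,\dots,s_t])\ge d$, which together with the count $tM$ establishes that $A[s_1,\dots,s_t]$ is an $(n+1,d)$ PA of size $tM$.
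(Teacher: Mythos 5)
Your proposal is correct. Note that the paper itself gives no proof of this theorem: it is quoted from Kl{\o}ve \emph{et al.}\ \cite{Klove10}, so there is no in-paper argument to compare against; your write-up is the standard one, and it is complete. Two points in its favor are worth recording. First, you silently repaired a typo in the paper's definition of $\phi_m$: as printed, the case split is on the position index $i$ (``$\pi_i=\sigma_i$ if $i<m$''), which does not in general produce a permutation (e.g.\ $\sigma=(2,1)$, $m=2$ would give $(2,2,2)$); your reading, conditioning on the value $\sigma_i$ so that $\phi_m(\sigma)=(m,f_m(\sigma_1),\dots,f_m(\sigma_n))$ with $f_m(v)=v$ for $v<m$ and $f_m(v)=v+1$ for $v\ge m$, is the intended one and is what makes $\phi_m(\sigma)$ a permutation of $[1..(n+1)]$. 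Second, your key lemma --- that the strictly increasing gap-insertion map $f_m$ is non-contracting, with $f_m(b)-f_m(a)\in\{b-a,\,b-a+1\}$ for $a<b$ --- is exactly the right crux, and combined with the telescoped separation $|m-m'|\ge d$ of the leading symbols it cleanly handles both cases ($m\ne m'$ via the first coordinate, $m=m'$ via the position realizing $d(\sigma,\sigma')\ge d$). The counting argument via recoverability of $m$ from the leading coordinate and of $\sigma$ via $f_m^{-1}$ is likewise airtight. One pedantic remark: you establish $d(A[s_1,\dots,s_t])\ge d$ rather than equality, but this matches how the paper uses such constructions throughout (e.g.\ in the proof of Theorem~\ref{1step}), so no repair is needed.
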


\begin{theorem} (\cite{Klove10})
\label{thm1}
If A is an $(n,d)$ PA of size M and $n \le 2d$, then A[d] is an
(n+1,d+1) PA of size M.
\end{theorem}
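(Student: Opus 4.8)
The plan is to verify the two separate assertions: that $|A[d]| = M$, and that $d(A[d]) = d+1$. The size claim is immediate, since $\phi_d$ is injective — $\phi_d(\sigma)$ determines $\sigma$ by deleting the leading symbol $d$ and decrementing every remaining symbol that exceeds $d$ — so distinct permutations of $A$ yield distinct images and $|A[d]| = M$. The substance is the distance claim, which I would attack coordinatewise. Writing $f(x) = x$ for $x < d$ and $f(x) = x+1$ for $x \ge d$, the image $\phi_d(\sigma)$ is the word $(d, f(\sigma(1)), \dots, f(\sigma(n)))$ on $[1..(n+1)]$. Thus for a fixed pair $\sigma,\tau \in A$, position $1$ contributes $|d-d| = 0$, and position $i+1$ contributes $|f(\sigma(i)) - f(\tau(i))|$.

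The first key step is the elementary identity, for any two symbols $a,b$,
\[
|f(a) - f(b)| = |a - b| + [\,\min(a,b) < d \le \max(a,b)\,],
\]
where the bracket equals $1$ exactly when $a$ and $b$ straddle the threshold $d$ (one strictly below, one at or above) and $0$ otherwise; this is immediate from the case analysis on which side of $d$ each entry lies. Two consequences follow. First, $|f(\sigma(i)) - f(\tau(i))| \ge |\sigma(i) - \tau(i)|$ at every coordinate, so $d(\phi_d(\sigma),\phi_d(\tau)) \ge d(\sigma,\tau) \ge d$. Second, since the bracket adds at most $1$, a pair realizing $d(\sigma,\tau) = d$ (one exists because $d(A) = d$) has image distance at most $d+1$. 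Hence the upper bound $d(A[d]) \le d+1$ is free, and the whole problem reduces to proving $d(\phi_d(\sigma),\phi_d(\tau)) \ge d+1$ for every pair, i.e. to producing one coordinate that picks up the extra $+1$.

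To obtain that $+1$, I would take a coordinate $i^*$ realizing $d(\sigma,\tau) \ge d$ and, without loss of generality, assume $\sigma(i^*) > \tau(i^*)$, so $\sigma(i^*) \ge \tau(i^*) + d$. It suffices to show that $\sigma(i^*)$ and $\tau(i^*)$ straddle $d$, since then the identity upgrades the gap to $|\sigma(i^*) - \tau(i^*)| + 1 \ge d+1$. The upper entry is automatic: $\sigma(i^*) \ge \tau(i^*) + d \ge 1 + d > d$. The lower entry is exactly where the hypothesis enters, through $\tau(i^*) \le \sigma(i^*) - d \le n - d \le d$.

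I expect the boundary case $n = 2d$ to be the main obstacle, and it is worth isolating now. When $n < 2d$ the estimate sharpens to $\tau(i^*) \le n - d < d$, the straddle holds outright, and the argument closes cleanly. When $n = 2d$ one only gets $\tau(i^*) \le d$, and the straddle genuinely fails when $\tau(i^*) = d$ and $\sigma(i^*) = 2d$: there $f$ shifts both entries equally and the gap at $i^*$ stays $d$. (Concretely, the permutations $(1,2,3,4)$ and $(1,4,3,2)$ of $[1..4]$ agree on the symbol $1$ and swap the block $\{2,3,4\}$, so their only distance-$2$ coordinates carry values $\{2,4\}$; their images under $\phi_2$ are only at distance $2$.) The crux is therefore to show that in this degenerate configuration some other coordinate — or the interaction with the inserted symbol $d$ — still forces image distance $d+1$. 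This is the step I would scrutinize most carefully: I would either supply the extra combinatorial input that excludes the degenerate configuration under the precise hypotheses of the cited statement, or fall back on the strict bound $n \le 2d-1$ under which the straddling argument of the previous paragraph is unconditional.
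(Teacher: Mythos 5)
Your analysis is correct all the way through, and the obstacle you isolate at $n = 2d$ is not a gap in your argument but a genuine defect in the statement as transcribed in this paper: your ``degenerate configuration'' is realized by an actual PA meeting every hypothesis, so it cannot be excluded. Indeed, $A = \{(1,2,3,4),\,(1,4,3,2)\}$ is a $(4,2)$ PA with $n = 2d = 4$, and, as you compute, $\phi_2$ sends it to $\{(2,1,3,4,5),\,(2,1,5,4,3)\}$, whose Chebyshev distance is $2$, not $3$; hence $A[d]$ need not be an $(n{+}1,d{+}1)$ PA. There is also nothing in the paper to compare your attempt against: the theorem is quoted from Kl{\o}ve et al.\ without proof, and the paper's definition of $\phi_m$ is itself garbled --- the case split should read $\pi_i = \sigma_i$ if $\sigma_i < m$ and $\pi_i = \sigma_i + 1$ if $\sigma_i \ge m$, as you silently assumed (the printed condition ``$i < m$'' does not even produce permutations in general).

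The correct repair is not your fallback to $n \le 2d-1$ but an off-by-one in the inserted symbol: the operator should be $A[d{+}1]$, i.e.\ prepend the symbol $d+1$ and shift the symbols $\ge d+1$. Then your straddle identity closes the argument unconditionally for all $n \le 2d$: at a coordinate $i^*$ with $\sigma(i^*) \ge \tau(i^*) + d$ one has $\tau(i^*) \le n - d \le d < d+1$ and $\sigma(i^*) \ge \tau(i^*) + d \ge d+1$, so the threshold $d+1$ is straddled with no boundary case and the image gap is at least $d+1$. Note that your fallback would be too weak for the paper's own use of the result: Theorem \ref{diag} covers $n = 2d$ (e.g.\ $P(5,3) \ge P(4,2)$), which only the $A[d{+}1]$ version delivers. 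Everything else in your writeup --- injectivity for the size claim, the identity $|f(a)-f(b)| = |a-b| + [\min(a,b) < m \le \max(a,b)]$, and the reduction to a single straddling coordinate --- is exactly the right machinery and survives the repair verbatim with $m = d+1$ in place of $m = d$.
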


Theorem \ref{n/d} implies the following:

\begin{theorem} (\cite{Klove10})
\label{diag}
If $d<n \le 2d$, then
$P (n + 1, d + 1) \ge P (n, d)$.
\end{theorem}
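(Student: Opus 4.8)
The plan is to read Theorem~\ref{diag} as nothing more than the effect of the single-symbol insertion of Theorem~\ref{thm1} on an optimal array. First I would fix $d$ and $n$ with $d<n\le 2d$ and choose an $(n,d)$ PA $A$ of maximum size, so that $|A|=P(n,d)$. The only hypothesis that Theorem~\ref{thm1} asks for is $n\le 2d$, which is part of what we are given, so I can apply it directly with the distinguished symbol $m=d$ to form the array $A[d]=\{\phi_d(\sigma)\mid\sigma\in A\}$ of permutations on $[1..(n+1)]$.

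By Theorem~\ref{thm1}, $A[d]$ is an $(n+1,d+1)$ PA and $|A[d]|=|A|$; in particular its size is exactly $P(n,d)$. Since $P(n+1,d+1)$ is by definition the maximum cardinality of any $(n+1,d+1)$ PA, it dominates the size of the particular array $A[d]$, and therefore $P(n+1,d+1)\ge |A[d]|=P(n,d)$, which is the assertion. The extra hypothesis $d<n$ plays no role in the inequality itself; it only places us in the regime where $P(n,d)>1$, so that the bound is of interest rather than vacuous.

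The point I want to stress is where the real work lives. At the level of Theorem~\ref{diag} there is essentially no obstacle: once Theorem~\ref{thm1} is in hand, the statement is a one-line consequence of maximality. The genuine content is internal to Theorem~\ref{thm1}, namely the claim that prepending the central symbol $d$ and relabelling the remaining values in an order-preserving way raises the minimum Chebyshev distance from $d$ to $d+1$ without decreasing the array's size. This is exactly the step that uses $n\le 2d$: the bound is what prevents a coordinate that already realizes a gap of $d$ from having that gap cancelled by the relabelling. Since we are entitled to assume Theorem~\ref{thm1}, I would not reprove this, and the corollary follows immediately.
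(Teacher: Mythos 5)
Your proposal is correct and matches the paper's own derivation: the paper presents Theorem~\ref{diag} as an immediate consequence of the insertion construction of Theorem~\ref{thm1} (the text's attribution to Theorem~\ref{n/d} appears to be a reference slip, since only Theorem~\ref{thm1} raises the distance to $d+1$), exactly as you do by applying $\phi_d$ to a maximum-size $(n,d)$ PA and invoking maximality of $P(n+1,d+1)$. Your observation that $d<n$ serves only to make the bound nonvacuous is also accurate.
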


In Table \ref{LowerBounds} we give several lower bounds for $P(n,d)$ and in Table \ref{UpperBounds} we give several upper bounds for $P(n,d)$.

%-----------------
\section{Lower Bounds}

In \cite{Klove10} a greedy algorithm was used to find a PA $C$ on $[1..n]$ with $d(C) \ge d$: 

\begin{quotation}
Let the identity permutation in $S_n$ be the first permutation in $C$. For any set of permutations chosen, choose as the next permutation in $C$ the lexicographically next permutation in $S_n$ with distance at least $d$ to the chosen permutations in C if such a permutation
exists.
\end{quotation}

We modified this greedy algorithm by choosing an initial set $C$ of  pairwise distance d permutations randomly. Because of the randomness, we also allowed the algorithm to automatically start again and repeat the process while recording the best result. 
We call this the $Random/Greedy$ strategy.

Many of the lower bounds in Table \ref{LowerBounds}, for small values of n, were obtained by this modified greedy algorithm. A few were found by computing a largest clique in a graph, whose nodes are all permutations, and edges are between nodes at Chebyshev distance $\ge d$, called the $Clique$ approach. Others were found using Theorems \ref{1step}, \ref{general}, \ref{Two}, or \ref{prefix}.
Computations using the ideas of Theorem \ref{prefix} were often done with a Max Weighted Clique solver tool \cite{NetX} \cite{MaxClique}. That is, to compute a lower bound for $P(n+m,d)$, a graph G was created with nodes labeled by permutations on m symbols of [1..(n+m)], and whose edges connect two nodes with labels $L_1$ and $L_2$, where $d(L_1,L_2) \ge d$. A node with label L is given a weight of $P_d(L^C)$, where the complement is taken with respect to the set [1..(n+m)]. Values for $P_d(L^C)$ were pre-computed, using a modification of the Random/Greedy algorithm. A maximum weighted clique of G corresponds to the lower bound given in Theorem \ref{prefix}. 
As the set of all permutations on a m symbol subset of [1..(n+m)] gets very large as m and n get large, heuristics were sometimes used to decide which permutations to use as labels in the graph G.

\begin{table}[htb]
{\small
\caption {Lower Bounds for $P(n,d)$.} 
\centering
\begin{tabular}{|r |r |r |r |r |r |r |r|r|r|c}
\hline
$n/d$ & 2 & 3 & 4 & 5 & 6 & 7&8&9&10\\ [0.5ex] 
\hline
2 &1&1&1&1&1&1&1&1&1\\ 
\hline
3 &{\bf 3}&1&1&1&1&1&1&1&1\\
\hline
4 & {\bf 6}&{\bf 3}&1&1&1&1&1&1&1\\
\hline
5 & {\bf 30}&{\bf 10}&{\bf 3}&1&1&1&1&1&1 \\
\hline
6 & {\bf 90}&{\bf 20}&{\bf 10}&{\bf 3}&1&1&1&1&1\\
\hline
7 & {\bf 630}&100&33&{\bf 10}&{\bf 3}&1&1&1&1 \\ 
\hline
8 & {\bf 2,520}&430&{\bf 70}&33&{\bf 10}&{\bf 3}&1&1&1 \\
\hline
9 & {\bf 22,680}&1,654&295&103&33&{\bf 10}&{\bf 3}&1&1\\
\hline
10 &{\bf 113,400}&9,033&1,336&247&103&33&{\bf 10}&{\bf 3}&1 \\
\hline
11 & see Thm \ref{Two} &53,549&6,397&998&326&103&33&{\bf 10}&{\bf 3}\\ 
\hline
12& see Thm \ref{Two}&317,728&26,678&4,355&842&330&103&33&{\bf 10}\\
\hline
13& see Thm \ref{Two}&1,642,473&114,720&17,049&3,294&978&330&103&33\\
\hline
14& see Thm \ref{Two}&11,081,916&647,420&81,888&10,709&2,805&1,089&330&103\\
\hline
15& see Thm \ref{Two}&55,409,580&3,887,796&392,033&50,283&8,604&3,144&1,089&330\\
\hline
16& see Thm \ref{Two}&332,457,480&15,551,184&1,898,103&250,867&37,017&9,379&3,399&1,089\\
\hline
17& see Thm \ref{Two}&1,994,744,880&77,755,920&7,592,412&1,261,267&174,655&30,106&10,374&3,399\\
\hline
18& see Thm \ref{Two}&11,968,469,280&514,382,400&33,735,870&3,783,801&862,566&129,756&31,779&10,758\\
[1ex] \hline
\end{tabular}
\label{LowerBounds} 
}
\end{table} 

We have, $P(n,d)=1$, for all $d \ge n$, as a single permutation is a $(n,d)$-PA.
That $P(n,n-1) = 3$, for all $n \ge 3$ was shown in \cite{Klove10}. We show $P(n,n-2) = 10$, for all $n \ge 5$ by Corollary \ref{ten} and the $Clique$ approach ( \cite{Klove10} incorrectly gave $P(n,n-2) \le 9$ ). 
The bound $P(4,2)=6$ was cited in \cite{Klove10}.

We show in Theorem \ref{Two} that $P(n,2) = \frac{n!}{2^{\floor{n/2}}}$. $P(6,3) \ge 20$ was cited in \cite{Klove10}. We computed $P(7,4) \ge 33$ by the $Random/Greedy$ strategy, which improved on the previous lower bound of $28$ \cite{Klove10}. It follows from Theorem \ref{thm1} that $P(n,n-3) \ge 33$, for all $n \ge 7$. 

The bounds $P(7,3) \ge 100$, $P(8,4) \ge 70$, and $P(9,5) \ge 103$ were found by the $Random/Greedy$ strategy, whereas \cite{Klove10} gave lower bounds of $84, 70$ and $95$, respectively.
That $P(n,n-4) \ge 103$, for all $n \ge 9$ follows from Theorem \ref{thm1}.
The bounds $P(8,3) \ge 430, P(9,4) \ge 295, P(10,5) \ge 247, P(11,6) \ge 326$ and $P(12,7) \ge 330$ were all found by the $Random/Greedy$ strategy, whereas \cite{Klove10} gave lower bounds of $401, 283, 236, 236$ and $236$, respectively. That $P(n,n-5) \ge 330$, for all $n \ge 12$, follows from Theorem \ref{thm1}.
The bounds $P(9,3) \ge 1,654, P(10,4) \ge 1,336, P(11,5) \ge 998, P(12,6) \ge 842$ and $P(13,7) \ge 978$ were all found by the $Random/Greedy$ strategy and $P(14,8) \ge 1,089$ was obtained by Theorem \ref{recursion}. That $P(n,n-6) \ge 1,089$, for all $n \ge 14$ follows from Theorem \ref{thm1}. The bounds $P(10,3) \ge 9,033, P(11,4) \ge 6,397, P(12,5) \ge 4,355, P(13,6) \ge 3,294, P(14,7) \ge 2,805, P(15,8) \ge 3,144$ were all found by Theorem~ \ref{prefix}. $P(16,9) \ge 3,399$ was found by Theorem \ref{general}, using $P(9,5)$ and $P(7,4)$.

Theorem \ref{1step} was used to obtain the current lower bound  $P(11,3) \ge 53,549$.
That is, by computation we found $P_3([1..11]-\{3\}) = P_3([1..11]-\{9\} \ge 17,573$ and $P_3([1..11]-\{6\}) \ge 18,403$. 
So, $P(11,3) \ge 2*17,573 + 18,403 = 53,549$. Here is a proof of Theorem \ref{1step}.

\bigskip

{\bf Theorem
\ref{1step}}. Let $A$ be a subset of $[1...(n+1)]$ such that $d(A) \ge d$.
If $n > d \ge 1$, then
$P(n + 1,d) \ge \sum_{i \in A}   P_d([1..(n+1)]-\{i\})$.

\begin{proof}
Let $A=\{a_1,a_2,...,a_k\}$ be a subset of $[1...(n+1)]$ such that $d(A) \ge d$. 
For $a_i \ne a_j$, and permutations $\sigma$ and $\tau$ in $[1..(n+1)]-\{a_i\}$ and $[1..(n+1)]-\{a_j\}$, respectively, $a_i\sigma$ and $a_j\tau$ are permutations on [1..(n+1)] such that $d(a_i\sigma,a_j\tau) \ge d$. 
It follows that $\bigcup_{a_i \in A} a_iB$, with $B$ a set of permutations over $[1..(n+1)]-\{a_i\}$ with Chebyshev distance $\ge d$, is a set of permutations on $[1..(n+1)]$ with Chebyshev distance $\ge d$.
\end{proof}

Here is a proof for Theorem \ref{general}.

{\bf Theorem \ref{general}}.
$P(n,d) \ge \max\{P(n_1,d_1)\cdot  P(n_2,d_2)~|~ d_1+d_2=d$ and $n_1+n_2=n$ and, for some constant $a$, $n_1 = ad_1+r_1$ and $n_2 = ad_2+r_2$, with $0 \le~r_1 \le~d_1$ and
with $0 \le~r_2 \le~d_2\}$, where the maximum is taken over all possible values of ${n_1,n_2,d_1,d_2}$.

\begin{proof} 

Let $n=n_1+n_2$ and $d=d_1+d_2$. 
Let $A$ be a PA on the $n_1$ symbols in $\Sigma_1 = [1 ... n_1]$ with Hamming distance $d_1$ and let $B$ be a PA on the $n_2$ symbols in $\Sigma_2 = [1  ... , n_2]$ with Hamming distance $d_2$. 
Let $\Sigma = [1 ...  n=n_1+n_2 ]$.
Define the function $F_1$ mapping $\Sigma_1$ into $\Sigma$ by: 

$F_1(x) = 
\begin{cases}
x& \mathrm{if~} 1 \le x \le r_1, \\
x+sd_2& \mathrm{if~} (s-1)d_1+r_1+1~ \le x \le~sd_1+r_1, \mathrm{~for~some~} 1 \le s \le a.
\end{cases}$

and define the function $F_2$ mapping $\Sigma_2$ into $\Sigma$ by:

$F_2(x) = 
\begin{cases}
x+(t-1)d_1+r_1& \mathrm{if~} (t-1)d_2 < x \le td_2, \mathrm{~for~some~} 1 \le t \le a,\\
x+n_1, & \mathrm{if~} ad_2~ < x \le ~ad_2+r_2.
\end{cases}$

\bigskip

Construct the PA C = \{~$F_1(\sigma)F_2(\tau)~ |~ \sigma \in A$ and $\tau \in B$~\}. 

C is a set of $|A|\cdot|B|$ permutations on the alphabet $\Sigma$ of $n$ symbols. We show that the Chebyshev distance between permutations in C is at least $d=d_1+d_2$.
Consider two different permutations $\pi_1 = F_1(\sigma_1)F_2(\tau_1)$ and $\pi_2 = F_1(\sigma_2)F_2(\tau_2)$
in C, where $\sigma_1,\sigma_2
\in A$ and $\tau_1,\tau_2 \in B$.  Since $\pi_1 \ne \pi_2$, either $\sigma_1 \ne \sigma_2$ or $\tau_1 \ne \tau_2$.
Due to the similarity of the argument we only explicitly examine the case when $\sigma_1 \ne \sigma_2$. So, the Chebyshev distance between $\sigma_1$ and $\sigma_2$ is at least $d_1$. That is, there is a position $i$ ($1 \le i \le n_1$) such that  $|\sigma_1(i)-\sigma_2(i)| \ge d_1$.
Assume, without loss of generality, that $\sigma_1(i) > \sigma_2(i)$. In other words, $\sigma_1(i)$ and $\sigma_2(i)$ are in different intervals of $d_1$ symbols in $\Sigma_1$, i.e. $\sigma_2(i)$ is in the interval [$(s-1)d_1+r_1,sd_1+r_1$], for some s,
and $\sigma_1(i)$ is in the interval [$(s'-1)d_1+r_1,s'd_1+r_1$], for some $s' > s$. Hence, $F_1$ maps $\sigma_1(i)$ to $\sigma_1(i)+s'd_2$
and maps $\sigma_2(i)$ to 
$\sigma_2(i)+sd_2$. So,
$\vline (\sigma_1(i)+s'd_2)\ - (\sigma_2(i)+sd_2)\vline =  \vline \sigma_1(i) - \sigma_2(i) + s'd_2 - sd_2 \vline = ~\vline \sigma_1(i) - \sigma_2(i)\vline ~+ ~\vline s'd_2 - sd_2 \vline ~\ge~ d_1+d_2$.

\end{proof}

\begin{example}

 For the example $P(16,9) \ge P(9,5)*P(7,4) \ge 3,399$, we see that

$F_1(x) = 
\begin{cases}
x& \mathrm{if~} 1 \le x \le 4, \\
x+4& \mathrm{if~} 5~ \le x \le 9
\end{cases}$

and

$F_2(x) = 
\begin{cases}
x+4& \mathrm{if~} 1 \le x \le 4, \\
x+9& \mathrm{if~} 5~ \le x \le 7
\end{cases}$.

Consider two permutations, say $\rho$ =  1,2,3,4,5,6,7,8,9~and $\sigma$ = 6,1,4,3,2,5,8,9,7, which are at Chebyshev distance 5,
and a permutation, say $\tau$ = 1,2,3,4,5,6,7.
Then, 
$F_1(\rho)$= 1,2,3,4,9,10,11,12,13 and
$F_1(\sigma)$= 
10,1,4,3,2,9,12,13,11. So,\\

$F_1(\rho)F_2(\tau)$ =
1,2,3,4,9,10,11,12,13,5,6,7,8,14,15,16,
and 

$F_1(\sigma)F_2(\tau)$= 
10,1,4,3,2,9,12,13,11,5,6,7,8,14,15,16 \\
are permutations on [1..16] and at Chebyshev distance 9.
\end{example}

Using the construction given in Theorem \ref{general}, we can obtain a PA for $P(3n,3)$ from PAs for $P(2n,2)$ and $P(n,1)$, respectively, which is of size $P(2n,2)*P(n,1)$. As we show in Corollary \ref{lowerbound_2} that $P(2n,2) \ge \frac{(2n)!}{2^{ n}}$ and, clearly, $P(n,1) = n!$, we have, for example, the lower bound $P(3n,3) \ge \frac{(2n)!n!}{2^{n}}$.

Turning now to the specific case of d=2. We first 
prove a recursive lower bound for 
P(n,2).

\begin{theorem}
\label{recursion_2}
For all $n \ge 4$, $P(n,2) \ge P(n-2,2) \binom{n}{2}$. 
\end{theorem}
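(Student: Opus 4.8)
The plan is to build an $(n,2)$ PA of size $\binom{n}{2}P(n-2,2)$ from a fixed $(n-2,2)$ PA $B$ on $[1..n-2]$ realizing $P(n-2,2)$, following the concatenation philosophy of Theorems \ref{1step}, \ref{general} and \ref{prefix}, but now indexing the new copies by the \emph{pair of positions that will hold the two largest symbols}. Concretely, for each unordered pair of positions $\{p,q\}\subseteq[1..n]$ (with $p<q$) and each $\beta\in B$, I would form a permutation $\pi$ of $[1..n]$ that places $n-1$ and $n$ into positions $p$ and $q$ and fills the remaining $n-2$ positions, read left to right, with $\beta(1),\dots,\beta(n-2)$. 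This produces $\binom{n}{2}|B|$ permutations, the desired count. The construction is clearly injective, since the positions of the two largest symbols recover $\{p,q\}$ and then $\beta$ is recovered from the remaining positions, so the whole content is the Chebyshev distance bound.

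Within a single frame, i.e.\ when $\{p,q\}=\{p',q'\}$, the distance bound is immediate: the two words agree in positions $p,q$, and in the remaining positions they carry $\beta$ and $\beta'$ in the same left-to-right order, so their Chebyshev distance equals $d(\beta,\beta')\ge 2$ because $B$ is an $(n-2,2)$ PA. For two codewords from \emph{different} frames, the key observation is that the set of positions carrying a symbol in $\{n-1,n\}$ differs, so the symmetric difference of $\{p,q\}$ and $\{p',q'\}$ is nonempty. If some position $z$ holds the symbol $n$ in one codeword but lies outside the other codeword's frame, then the other codeword holds a value $\le n-2$ at $z$, giving $|n-\pi'(z)|\ge 2$. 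A short case check on the symmetric difference (disjoint frames, or frames sharing one position that plays opposite roles) shows this backup position always exists whenever the two frames do not share the position of their largest symbol.

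I expect the remaining case, in which both codewords place $n$ in the same position $q=q'$, to be the main obstacle. Here both words carry $n$ at $q$ and $n-1$ at their respective, distinct, smaller positions, and a uniform orientation rule provably fails: for $n=4$ the frames $\{2,4\}$ and $\{3,4\}$ would produce $1324$ and $1234$, which are at distance $1$. The fix is to let the orientation of the pair $\{n-1,n\}$, and the internal arrangement of the tail, depend on the frame, so that within each family sharing a largest position one still obtains a pairwise distance-$2$ subcode governing where $n-1$ sits among the first $q-1$ positions; this is essentially a smaller instance of the same problem and is exactly the adaptive choice visible in the hand-built optimal code $\{1234,2413,3142,1342,3124,3421\}$ for $P(4,2)=6$.

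Carrying out this adaptive rule uniformly in $n$, and proving it never creates an off-by-one collision in the delicate case, is the crux of the argument; everything else is bookkeeping. Once it is in place, the count $\binom{n}{2}P(n-2,2)$ follows, and iterating the recursion from the base values $P(2,2)=1$ and $P(3,2)=3$ reproduces the closed form $\frac{n!}{2^{\floor{n/2}}}$ of Theorem \ref{Two}, matching the orbit-counting upper bound (each coset of the group generated by the swaps $(2i-1\ 2i)$ contains at most one codeword, of which there are $\frac{n!}{2^{\floor{n/2}}}$), so the recursion is exactly tight.
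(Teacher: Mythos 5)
Your construction skeleton is exactly the paper's — insert the two new symbols $a=n-1$ and $b=n$ into each of the $\binom{n}{2}$ position pairs of each $\beta$ in an optimal $(n-2,2)$ PA — and you correctly isolate the hard case (two codewords placing $n$ at the same position) and correctly observe, with a valid $n=4$ counterexample, that a uniform orientation of the pair $\{a,b\}$ fails. But the proposal has a genuine gap at precisely the step you yourself call the crux: you never exhibit the adaptive rule, you only assert that one exists, citing a hand-built $P(4,2)$ code as evidence. The paper's rule is concrete and simple: always insert $a$ before $b$, and then swap $a$ and $b$ if and only if the resulting word contains the symbols in the order $a,\,n-2,\,b$ (that is, the symbol $n-2$ falls strictly between the two inserted positions). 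Note that this trigger depends on where $n-2$ lands, hence on $\beta$ as well as on the frame; your sketch, which lets the orientation ``depend on the frame,'' cannot work as stated. For instance with $n=5$, frames $\{1,5\}$ and $\{2,5\}$ and $\beta=321$ from the $(3,2)$ PA $\{132,213,321\}$, any fixed per-frame orientation with both frames agreeing on the placement of $n$ yields the words $4,3,2,1,5$ and $3,4,2,1,5$ at distance $1$, independently of the frames' orientations relative to each other elsewhere; only information about the tail resolves this. Your alternative suggestion of also letting ``the internal arrangement of the tail'' vary is worse: the within-frame distance argument you gave relies on both tails carrying $\beta,\beta'$ in the same left-to-right order, so perturbing the tail order would reopen that case.

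For completeness, here is how the paper discharges the delicate case under its swap rule, which is the verification your proposal leaves open. Suppose $\sigma,\tau$ both carry $b$ at position $i$, with $a$ at position $j$ in $\sigma$ and at position $k\ne j$ in $\tau$; if $d(\sigma,\tau)\le 1$ then necessarily $\tau(j)=n-2$ and $\sigma(k)=n-2$. Each relative ordering of $i,j,k$ now contradicts the rule: if $j<k<i$ then $\sigma$ exhibits the pattern $a,\,n-2,\,b$, so it should have been swapped; if $i<j<k$ then $\sigma$ reads $b,\,a,\,n-2$, which can only arise from a swap, yet the pre-swap word $a,\,b,\,n-2$ does not contain the triggering pattern; and if $j<i<k$ then $\sigma$ reads $n-2$ wait---in this ordering $\sigma$ exhibits $a,\,b$ with $n-2$ after $b$ (at position $k$), again a swap applied without its trigger (symmetrically one argues through $\tau$ in the remaining arrangements). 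The cases where the frames overlap in at most one position, or where one word has $a$ where the other has $b$, are handled as you indicated, by locating the symbol $b=n$ against a tail symbol of value at most $n-2$. So your proposal is a correct reconstruction of the paper's strategy minus its one essential invention, the $(a,\,n-2,\,b)$-swap rule and its three-way case check; as submitted it does not constitute a proof.
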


\begin{proof}

Let $A$ be a PA on the $n-2$ symbols 
%with $d=2$ and extends it to a PA on $n+2$ %symbols with $d=2$.
%Let the original PA is use
$\{1, ... , n-2 \}$ with Chebyshev distance 2. Take new symbols $a=n-1$, $b=n$, and insert them into each permutation of $A$ in each of the possible $\binom{n}{2}$ positions such that $a$ precedes $b$. If in the resulting permutation, the symbols appear in the order $a, n-2, b$, possibly separated by other symbols, then swap the positions of $a$ and $b$. Let the resulting PA be $B$. Clearly, $B$ has $\binom{n}{2}$ times as many permutations as $A$. We show that $B$ has Chebyshev distance 2.

For a proof by contradiction, assume $\sigma , \tau \in B$ have $d(\sigma,\tau) \le 1$. If $\sigma , \tau$ are such that, $\sigma(i),\tau(i) \in \{ a,b \}$ and $\sigma(j),\tau(j) \in \{ a,b \}$, for some $i,j$, then, $d(\sigma,\tau) \ge 2$, because removing symbols a,b gives a permutation in A and all permutations in A have distance at least 2. It follows that two permutations $\sigma , \tau$ have at most one position, say $i$, such that $\sigma (i), \tau (i) \in \{ a,b \}$. If there is no position $i$ such that $\sigma (i), \tau (i) \in \{ a,b \}$, then $d(\sigma,\tau) \ge 2$, as the symbol $b$ is at distance at least 2 with all symbols except $a$ and itself. Similarly, it follows that there cannot be a position $i$ such that $\sigma (i) = \tau (i) = a$ or $\sigma (i) = a$ and $\tau (i) = b$, as this means  $\sigma (j) = b$, for some $j$, and $\tau(j) \notin \{ a ,b \},~ i.e.~ |\sigma(j) - \tau(j) | \ge 2$. 

There is one remaining case, namely, $\sigma (i) = \tau (i) = b$, for some $i$, then, for some $j \ne k$, $\sigma (j) = a$ and $\tau (k) = a$. 
As we are assuming $d(\sigma,\tau) \le 1$, we must have $\tau(j) = n-2$ and $\sigma(k) = n-2$. 
Now consider the order of the positions $i,j$, and $k$. 
If both $j$ and $k$ are less than $i$, say in the order $j < k < i$. 
Then, the permutation $\sigma$ has symbols in the order $a, n-2, b$, which contradicts the requirement that the symbols $a$ and $b$ are swapped. 
If both $j$ and $k$ are greater than $i$, say in the order $i < j < k$, then the permutation $\sigma$ has the symbols in the order $b$, $a$, $n-2$, which contradicts the requirement that the symbols $a$ and $b$ not be swapped. Lastly, if we have the order, say $j < i < k$, then the permutation $\sigma$ has the symbols in the order $n-2$, $b$, $a$, which contradicts the requirement that the symbols $a$ and $b$ not be swapped.
\end{proof}

The following gives a lower bound for $P(n,2)$ which is larger than the bound $P(2a,2) \ge \frac{97}{24}(a!)^2$ in \cite{Klove10} by an exponential factor. It is proven by induction using Theorem \ref{recursion_2}.

%----------------
\begin{cor}
\label{lowerbound_2}
$P(n,2)\ge \frac{n!}{2^{\floor{n/2}}}$.
\end{cor}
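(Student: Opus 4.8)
The plan is to prove the bound by induction on $n$, advancing in steps of two so that each inductive step is a direct application of Theorem \ref{recursion_2}. Since that recursion relates $P(n,2)$ only to $P(n-2,2)$, I would anchor the induction separately in each parity class, taking $n=2$ and $n=3$ as base cases. For $n=2$, any two permutations on $\{1,2\}$ differ in each coordinate by exactly $1$, so no two can be at distance $\ge 2$ and hence $P(2,2)=1$, which matches $\frac{2!}{2^{\floor{2/2}}}=1$. For $n=3$ the formula evaluates to $\frac{3!}{2^{\floor{3/2}}}=3$, and $P(3,2)\ge 3$ holds (see Table \ref{LowerBounds}), so both base cases are in place; these cover $n=4$ and $n=5$ respectively once the step is applied.

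For the inductive step I would assume $n\ge 4$ and $P(n-2,2)\ge \frac{(n-2)!}{2^{\floor{(n-2)/2}}}$, and then apply Theorem \ref{recursion_2} to obtain
\[
P(n,2)\ge P(n-2,2)\binom{n}{2}\ge \frac{(n-2)!}{2^{\floor{(n-2)/2}}}\cdot \frac{n(n-1)}{2}.
\]
The remaining work is purely arithmetic: using $n(n-1)(n-2)!=n!$ together with the floor identity $\floor{(n-2)/2}=\floor{n/2}-1$ (valid because subtracting the integer $1$ from the argument of $\floor{\cdot}$ lowers the value by exactly $1$), the right-hand side collapses to
\[
\frac{n!}{2\cdot 2^{\floor{n/2}-1}}=\frac{n!}{2^{\floor{n/2}}},
\]
which closes the induction.

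I do not expect a genuine obstacle here: the recursion of Theorem \ref{recursion_2} is tight enough that it reproduces the claimed closed form exactly rather than merely bounding it, so the factor $\binom{n}{2}=\frac{n(n-1)}{2}$ matches the ratio of consecutive values of $\frac{n!}{2^{\floor{n/2}}}$ perfectly. The only points needing care are the separate handling of the two parity classes in the base cases and the floor identity $\floor{(n-2)/2}=\floor{n/2}-1$. Finally, to justify the remark preceding the corollary, I would note that for $n=2a$ the bound is $\frac{(2a)!}{2^{a}}=\frac{(a!)^2}{2^a}\binom{2a}{a}$, whose ratio to $\frac{97}{24}(a!)^2$ is $\frac{24}{97}\cdot 2^{-a}\binom{2a}{a}$, and this grows like $\frac{2^{a}}{\sqrt{\pi a}}$ by Stirling's approximation, confirming the claimed exponential improvement.
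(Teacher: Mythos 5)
Your proof is correct and takes essentially the same route as the paper: an induction in steps of two anchored at $P(2,2)=1$ and $P(3,2)=3$, with each step given by Theorem \ref{recursion_2} and the same arithmetic simplification $\binom{n}{2}\cdot\frac{(n-2)!}{2^{\floor*{(n-2)/2}}}=\frac{n!}{2^{\floor*{n/2}}}$. Your closing Stirling estimate justifying the ``exponential factor'' remark is a correct addition but is not part of the paper's proof.
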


\begin{proof}
This is shown by induction on n. First observe that $P(3,2)=3$ and $P(2,2)=1$. For the inductive step, assume $P(n,2) \ge \frac{n!}{2^{\floor{n/2}}}.$ By Theorem \ref{recursion_2}, $P(n+2,2) \ge P(n,2)*\binom{n+2}{2}$. By the inductive hypothesis, we obtain 
$P(n+2,2) \ge \frac{n!}{2^{\floor{n/2}}}
\frac{(n+2)(n+1)}{2}=\frac{(n+2)!}{2^{\floor{(n+2)/2}}}$
\end{proof}

\bigskip

Here is a proof for Theorem \ref{prefix}.

\textbf{Theorem \ref{prefix}} For any $n\ge d\ge 1$,
$P(n+m,d) \ge \max_{A \in Q((n+m),m,d)}~ \sum_{\sigma \in A} P_d(\sigma^C)$.

\begin{proof}
Let $\sigma_1$ and $\sigma_2$ be  permutations of length $m$ over the alphabet $[1 ... n]$ with Chebyshev distance at least d. We call these  \textit{prefixes}. Let $\tau_1$ and $\tau_2$ be  permutations over $\Sigma^{-\sigma_1}_n$ with Chebyshev distance at least $d$. We call these \textit{suffixes}.  The Chebyshev distance  between $\sigma_1 \tau_1$
and $\sigma_1 \tau_2$ is at least $d$ and the Chebyshev distance between $\sigma_1 \tau$
and $\sigma_2 \tau$ is at least $d$, for any $\tau$.
So, for any set $U \in Q(n,m,d)$, the set $\{ ~\sigma\tau~ | ~ \sigma \in U$ and $\tau \in V$, where $V\in Q_d(\Sigma^{-\sigma}_n) \}$, is a PA on n symbols with pairwise Chebyshev distance at least d and has $\sum_{\sigma \in U} P_d(\Sigma^{-\sigma}_n)$ permutations.
\end{proof}

%Observe that the number of suffix permutations $\tau$
%to use in the proof of Theorem 1 depends on the alphabet $\Sigma - \Sigma_1$. For example, if $\Sigma_1$ is the set $\{ 1,2, ... , x \}$, then $\Sigma - \Sigma_1$ is the set $\{ x+1,x+2, ... ,n \}$ of contiguous symbols. So, there are $P(n-x,d)$ permutations $\tau$ to choose. However, if $\Sigma_1$ is not a set of contiguous symbols, then there are generally more than $P(n-x,d)$ permutations $\tau$ to choose.  Hence, for the sake of computing $P(n,d)$
%we consider the effect of different alphabets 

As an example, we show that $P(12,4) \ge 26,678$. Create a graph, say G, whose nodes are all prefixes of length three and whose edges connect such nodes with Chebyshev distance at least four. Furthermore, a node $\sigma$, a prefix of length three, is given the weight $P_4(\Sigma_{14}^{-\sigma})$.
That is, the weight of a node is the maximum number of suffixes for the given prefix. By Theorem 5, the size of
a maximum weighted clique of G is a lower bound for $P(12,4)$. Using a MaxClique solver \cite{MaxClique} \cite{NetX} we obtaind the lower bound 26,678.

\bigskip

We now give a proof for Theorem \ref{gen}. 

\bigskip

{\bf Theorem
\ref{gen}}. 
For any $d\ge 3$ and $k\ge 1$, 
$$P(dk+d-1,d)\ge \left((k+1)^{d} - {k+d-1\choose d-1}\right) P(dk-1,d).$$

\begin{proof} 
Let $\Phi(a_1,a_2,\dots,a_s)$ denote the alphabet $[1..(dk+d-1)]-\{a_1,a_2,\dots,a_s\}$, for $a_1,a_2,\dots,a_s\in [1..(dk+d-1)]$.
By Theorem \ref{1step}, $P(dk+d-1,d)\ge \sum_{a_1\in A_1} P_d(\Phi(a_1))$, where $A_1=\{d-1,2d-1,\dots,kd+d-1\}$. 
Note that $|\Phi(a_1)|=dk+d-2$.
Similarly, for each $\Phi(a_1)$, by Theorem \ref{1step},   $P_d(\Phi(a_1))\ge \sum_{a_2\in A_2} P_d(\Phi(a_1,a_2))$, where $A_2=\{d-2,2d-2,\dots,kd+d-2\}$. 
Note that $|\Phi(a_1,a_2)|=dk+d-3$.
By applying Theorem \ref{1step} $d-1$ times, $P_d(\Phi(a_1,a_2,\dots,a_{d-2}))\ge \sum_{a_{d-1}\in A_{d-1}} P_d(\Phi(a_1,a_2,\dots,a_{d-1}))$, where $A_{d-1}=\{1,d+1,\dots,kd+1\}$. 
Note that \\$|\Phi(a_1,a_2,\dots,a_{d-1})|=dk$.
\begin{equation}
P(dk+d-1,d)\ge \sum_{a_1\in A_1}
\sum_{a_2\in A_2}
\dots \sum_{a_{d-1}\in A_{d-1}} P_d(\Phi(a_1,a_2,\dots,a_{d-1})).
\end{equation}

Note that there are $k+1$ choices for each of the symbols $a_i$, $1 \le i \le d-1$, with the property that any two choices are at distance at least $d$.
Consider a sequence $\alpha=(a_1,a_2,\dots,a_{d-1})$ with $a_i\in A_i, 1 \le i \le d-1$.
We call such a sequence $a_1,a_2,\dots,a_{d-1}$ {\em monotone} if $a_1>a_2>\dots > a_{d-1}$; 
otherwise, the sequence is {\em mixed}. 
%Let $\Phi_\alpha=[1..(dk+d-1)]-\{a_1,a_2,\dots,a_{d-1}\}$.
%Since $|A_1|=|A_2|=\dots=|A_{d-1}|=k+1$, 
% \begin{equation}
% P(dk+d-1,d)\ge \sum_{a_1,a_2,\dots,a_{d-1}} P_d(\Phi_{d-1}).
% \end{equation}

So far, we have sequences, such as $\alpha$, of length $d-1$. We now consider sequences of length $d$ obtained by adding an extra symbol to $\alpha$ (at the end). 
Since $|\Phi(a_1,a_2,\dots,a_{d-1})|=dk$, by Theorem \ref{recursion2} $$P_d(\Phi(a_1,a_2,\dots,a_{d-1}),d)\ge k P(dk-1,d).$$ 
That is, the proof of Theorem \ref{recursion2} shows there are always $k$ symbols one can add to the end of such sequences $\alpha$ and preserve distance d. We show that $P_d(\Phi(a_1,a_2,\dots,a_{d-1}))\ge (k+1) P(dk-1,d)$, if the sequence $a_1,a_2,\dots,a_{d-1}$ is mixed. That is, there are always $k+1$ symbols at pairwise distance d to add to the end of $\alpha$, if $\alpha$ is mixed.
Note that, for symbols $x$ and $y$, such that $d(x,y) \ge d$, $d(\alpha x , \alpha y|) \ge d$.

Assume $a_1,a_2,\dots,a_{d-1}$ is mixed.
We construct a sequence $S = s_1, s_2, \dots s_{k+1}$ of elements in $P_d(\Phi(a_1,a_2,\dots,a_{d-1}))$ with $d(s_i,s_{i+1}) \ge d$, for all i. 
Using $S$ we get $k+1$ sequences, say $\tau_1, \tau_2, \dots , \tau_{k+1}$, where $\tau_i$ consists of 
$a_1,a_2,\dots,a_{d-1}$ followed by $s_i$. It follows that $P_d(\Phi(\tau_i)) \ge (k+1)P(dk-1,d)$.

Consider a table $T$ with $d-1$ columns and $k+1$ rows, where row $i$ of $T$ contains the $i^{th}$ element of $A_j$ and column $j$ of $T$, $1 \le j \le d-1$ contains the elements of $A_{d-j}$ in sorted order. In particular, row $i$ and column $j$ of $T$ contains the element $(i-1)d+j$. See Table \ref{Explain} for an example when $d=6$ and $k=5$.

The desired sequence $S = s_1, s_2, \dots , s_{k+1}$ is obtained from Table \ref{Explain} by choosing one element from each row with the property that the element chosen from row $i+1$ must come from a column whose index is at least as large as the index of the column chosen for row $i$. (This is to ensure distance at least $d$.) Also, an element must be chosen from each row in order to get a sequence of length $k+1$. In addition, one cannot choose any of the elements in the sequence $a_1,a_2,\dots, a_{d-1}$, which are already in $\alpha$, and so are numbers deleted from the alphabet, There is one and only one such symbol  in each column. For example, consider the mixed sequence 17, 22, 15, 8, 1 shown (in bold) in Table \ref{Explain} (represented in the table in right-to-left order).
In this example a desired sequence $S$ can be chosen to be 4, 10, 16, 23, 29, 35.
In the mixed sequence 17, 22, 15, 8, 1 we have $a_1=17 < a_2=22$. 

In every mixed sequence $a_1, a_2, \dots, a_{d-1}$
there must be a $j$ such that $a_j \le a_{j+1}$. The desired sequence $S$ can be chosen by taking elements in order in column $d-j-1$ until (but not including) $a_{j+1}$, say in row $i$), followed by elements in column $d-j$ starting in row $i$ and continuing through all remaining rows. This always works as (1) each column has one and only one deleted element and (2) the condition $a_j \le a_{j+1}$ ensures that the deleted element in column $d-j$ occurs in a row with index smaller than $i$.

Observe that, if $a_1,a_2,\dots,a_{d-1}$ is monotone, there is no $j$ such that $a_j < a_{j+1}$. Consequently, there is no way to construct the desired sequence $S$ by moving to a higher index column when a deleted symbol is encountered. That is, the higher index column always has a different deleted symbol in the given row or a latter row.

\begin{table}[]
    \centering
    \begin{tabular}{|c|c|c|c|c|}
 {\bf 1}&2&3&4&5 \\
7&{\bf 8}&9&10&11\\
13&14&{\bf 15}&16&{\bf 17}\\
19&20&21&{\bf 22}&23\\
25&26&27&28&29\\
31&32&33&34&35\\
    \end{tabular}  \caption{An example of a mixed sequence (in bold), for $d=6$ and $k=5$. The sequence 17,22,15,8,1 is shown right-to-left.}
    \label{Explain}
\end{table}

Let $M$ be the set of all sequences $m_j = a_1,a_2,\dots,  a_{d-1}$ with $a_i \in \{d-i,2d-i,\dots,kd+d-i\}$, for all $i, 1 \le i \le d-1$, with the property that, for $j \ne k$, $d(m_j,m_k) \ge d$. 
Map each sequence $m_i = a_1,a_2,\dots$, $a_{d-1}$ to $x =(x_1,x_2,\dots,x_{d-1}) \in [0..k]^{d-1}$
using $$x=(\floor{a_1/d},\floor{a_2/d},\floor{a_3/d},\dots,\floor{a_{d-1}/d}).$$
A sequence $a_1,a_2,\dots,a_{d-1}$ is monotone if and only if $x_1\ge x_2\ge\dots\ge x_{d-1}$.
The number of such vectors $x$ is ${k+d-1\choose d-1}$. (This is the number of ways of choosing a set of $d-1$ elements from $k+1$ sets of $d-1$ indistinguishable items.)
So, the number of monotone sequences $a_1,a_2,\dots,a_{d-1}$ is $n_{mon}={k+d-1\choose d-1}$.
The number of mixed sequences $a_1,a_2,\dots,a_{d-1}$ is $n_{mix}=(k+1)^{d-1}-{k+d-1\choose d-1}$. 
That is, the number of choices for $a_1 \in A_1, a_2 \in A_2, ... , a_{d-1} \in A_{d-1}$ is $(k+1)^{d-1}$, and
\begin{align}
P(dk+d-1,d)&\ge \sum_{a_1\in A_1}
\sum_{a_2\in A_2}
\dots \sum_{a_{d-1}\in A_{d-1}} P_d(\Phi(a_1,a_2,\dots,a_{d-1}))\\
%P(dk+d-1,d) &\ge \sum_{\Phi} P_d(\Phi)\\
&\ge (k n_{mon}+(k+1) n_{mix}) P(dk-1,d)\\
&\ge \left((k+1)^{d} - {k+d-1\choose d-1}\right) P(dk-1,d).
\end{align}
The theorem follows.
\end{proof}

Lower bounds for $P(n,d)$
are given in Table \ref{LowerBounds}.
The values in bold are 
exact. 
Precise lower bounds for $P(n,2)$ are given in Theorem \ref{fixed}.
Other lower bounds are from Theorems \ref{1step}, \ref{recursion}, 
\ref{general} and \ref{Two}, and from the Random/Greedy algorithm. We offer some side-by-side comparisons with results from Table II in \cite{Klove10} shown  below in parentheses.

\begin{center}
\begin{tabular}{cc}
$P(5,2) \ge 30$ ~(\it{29})&

$P(7,2) \ge 630$  ~ (\it{582})\\

$P(n,n-2) = 10$, for all $n \ge 5$ ~(\it{9})&

$P(7,3) \ge 100$ ~(\it{84})\\

$P(8,3) \ge 430$ ~(\it{401})&

$P(n,n-3) \ge 33$, for all $n \ge 7$ ~(\it{28})\\

$P(8,4) \ge 70$ ~(\it{68})&

$P(9,4) \ge 295$ ~(\it{283})\\

$P(n,n-4) \ge 103$, for all $n \ge 9$ ~ (\it{95})&

$P(10,5) \ge 247$ ~ (\it{236})\\

$P(11,6) \ge 326$ ~(\it{236})&

$P(n,n-5) \ge 330$, for all $n \ge 12$ ~(\it{236})\\

\end{tabular}
\end{center}

% Theorem 6: P(14,8)>=P(7,4)^2
% Theorem 3: P(15,3)>=5P(14,3)
%\SB{$P(16,3)\ge 291,262,920$ and $P(17,3)\ge 1,747,577,520$ and $P(18,3)\ge 10,485,465,120$}

\section{Upper Bounds}

\begin{table}[htb]
\caption{Upper Bounds for $P(n,d)$.} 
\centering
\begin{tabular}{|r |r |r |r |r |r |r |r|r|r|c}
\hline
$n/d$ & 2 & 3 & 4 & 5 & 6 & 7&8&9&10\\ [0.5ex] 
\hline
2 &1&1&1&1&1&1&1&1&1\\ 
\hline
3 &3&1&1&1&1&1&1&1&1\\
\hline
4 & 6&3&1&1&1&1&1&1&1\\
\hline
5 & 30&10&3&1&1&1&1&1&1 \\
\hline
6 & 90&20&10&3&1&1&1&1&1\\
\hline
7 & 630&105&35&10&3&1&1&1&1 \\ \hline
8 & 2,520&560&70&56&10&3&1&1&1 \\ 
\hline
9 & 22,680&1,680&378&126&84&10&3&1&1\\
\hline
10 &113,400&12,600&2,100&256&210&100&10&3&1 \\
\hline
11 &see Thm \ref{Two}&92,400&11,550&1,386&462&330&110&10&3\\ 
\hline
12&see Thm \ref{Two}&369,600&34,650&7,920&924&792&495&120&10\\
\hline
13&see Thm \ref{Two}&3,603,600&270,270&72,072&5,148&1,716&1,287&715&130\\
\hline
14&see Thm \ref{Two}&33,633,600&2,102,100&252,252&30,030&3,432&3,003&2,002&910\\
\hline
15&see Thm \ref{Two}&168,168,000&15,765,750&768,768&420,420&19,305&6,435&5,005&3,003\\
[1ex] \hline
\end{tabular}
\label{UpperBounds} 
\end{table} 

We begin with a proof of Theorem \ref{upperbound}, which is an improvement on  Theorem  \ref{Bound}.
\bigskip
 
{\bf Theorem \ref{upperbound}}.
For $1 \leq k \leq d < n$,
\begin{align*}
P(n,d) \le P(n-k, d) \cdot \binom{n}{k}.    
\end{align*}

\begin{proof}
Consider any PA on $n$ symbols with distance $d$. Partition the PA into subsets determined by the positions of the highest $k$ symbols, $\lbrace n-k+1, n-k+2, \dots, n \rbrace$. Two permutations are in the same subset if their highest $k$ symbols occur in the same subset of $k$ positions, though not necessarily with the same symbol in the same position. For example if $n=5, d=2$, and $k=2$, then the permutations $54321$ and $45132$ would be in the same subset since the symbols $4$ and $5$ both occur in positions $1$ and $2$. Observe that there can be at most $\binom{n}{k}$ subsets since that is the number of ways to choose $k$ positions.

Since any two permutations must have distance at least $d$, and there is no way for any pair of the highest $k \leq d$ symbols to satisfy this distance, within a single subset the Chebyshev distance must be satisfied by the remaining $n-k$ symbols, $\lbrace 1, 2, \dots, n-k \rbrace$. Assume each of the $\binom{n}{k}$ subsets contains $P(n-k, d)$ permutations. If we add one additional permutation to the PA, it will belong to exactly one of these subsets. If we take that subset and delete the highest $k$ symbols from each permutation, we are left with a contracted PA on $n-k$ symbols and distance $d$, however it now contains more than $P(n-k, d)$ permutations, giving us a contradiction. Therefore we can have no more than $P(n-k, d) \cdot \binom{n}{k}$ permutations in the original PA.
\end{proof}

Note that the best results from Theorem \ref{upperbound} typically come from choosing $k=d$.

\begin{example}
By Theorem \ref{upperbound}, $P(11,6) \le P(5,6) \binom{11}{6}$. Since $P(5,6) = 1$, this means $P(11,6) \le \binom{11}{6} = 462$. In \cite{Klove10}, Example 3, they gave  $P(11,6) \le 850$.

\end{example}

Again, we turn to d=2.

\begin{cor}
\label{upperbound_2}
$P(n,2) \le \frac{n!}{2^{\floor{n/2}}}$.
\end{cor}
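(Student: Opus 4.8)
The plan is to obtain an upper-bound recursion that exactly mirrors the lower-bound recursion of Theorem~\ref{recursion_2} and then close the argument by induction. First I would apply Theorem~\ref{upperbound} with $k=d=2$, which is permissible whenever $2<n$, to get
\[
P(n,2) \le P(n-2,2)\cdot\binom{n}{2}
\]
for all $n\ge 3$. The crucial observation is that this is the very same factor $\binom{n}{2}$ that appears in the lower bound of Theorem~\ref{recursion_2}, so the two recursions coincide and the resulting upper and lower bounds will match term by term.

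Next I would run an induction on $n$ in steps of two, handling the even and odd cases simultaneously through the two base cases $P(2,2)=1$ and $P(3,2)=3$; a direct check gives $\frac{2!}{2^{\floor{2/2}}}=1$ and $\frac{3!}{2^{\floor{3/2}}}=3$, so the claim holds at the base. For the inductive step, assuming $P(n,2)\le \frac{n!}{2^{\floor{n/2}}}$, the recursion combined with the hypothesis yields
\[
P(n+2,2)\le \frac{n!}{2^{\floor{n/2}}}\cdot\frac{(n+2)(n+1)}{2}
= \frac{(n+2)!}{2^{\floor{n/2}+1}} = \frac{(n+2)!}{2^{\floor{(n+2)/2}}},
\]
using $\binom{n+2}{2}=\frac{(n+2)(n+1)}{2}$ and the identity $\floor{(n+2)/2}=\floor{n/2}+1$.

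The argument is essentially routine, so there is no genuine obstacle; the only point requiring care is the exponent bookkeeping for $2^{\floor{n/2}}$, namely confirming that exactly one factor of $2$ is absorbed at each step-of-two in the induction regardless of the parity of $n$. Combined with Corollary~\ref{lowerbound_2}, this upper bound pins down the exact value $P(n,2)=\frac{n!}{2^{\floor{n/2}}}$ claimed in Theorem~\ref{Two}.
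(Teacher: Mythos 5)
Your proof is correct and is essentially identical to the paper's own argument: both apply Theorem~\ref{upperbound} with $k=d=2$ to get the recursion $P(n+2,2)\le P(n,2)\binom{n+2}{2}$, use the base cases $P(2,2)=1$ and $P(3,2)=3$, and close the induction via $\floor{(n+2)/2}=\floor{n/2}+1$. Your added remark that this recursion mirrors Theorem~\ref{recursion_2} matches how the paper combines the two bounds to prove Theorem~\ref{Two}.
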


\begin{proof}
This is shown by induction on $n$. First observe that $P(3,2)=3$ and $P(2,2)=1$. For the inductive step, assume $P(n,2) \le \frac{n!}{2^{\floor{n/2}}}.$ By Theorem \ref{upperbound}, $P(n+2,2) \le P(n,2)*\binom{n+2}{2}$. By the inductive hypothesis, we obtain 
$P(n+2,2) \le \frac{n!}{2^{\floor{n/2}}}
\frac{(n+2)(n+1)}{2}=\frac{(n+2)!}{2^{\floor{(n+2)/2}}}$ 
\end{proof}

{\bf Theorem \ref{Two}}.
$P(n,2) = \frac{n!}{2^{\floor{n/2}}}$.

\bigskip

Theorem \ref{Two})
follows directly from Corollaries \ref{lowerbound_2} and \ref{upperbound_2}.

\bigskip

Upper bounds, for small values of $n$ and $d$, shown in Table \ref{UpperBounds}  were computed by determining the largest clique in a ``distance" graph, $i.e.$ a graph with a node for each permutation and an edge between pairs of nodes at distance at least $d$. 
Others are computed by Theorem \ref{upperbound}.
We offer some side-by-side comparisons with results from Table II in \cite{Klove10} shown in parentheses below.

\bigskip

\begin{center}
\begin{tabular}{cc}

$P(4,2) \le 6$ ~(\it{24})&

$P(5,2) \le 30$ ~(\it{120})\\

$P(6,2) \le 90$ ~(\it{720})&

$P(7,2) \le 630$ ~(\it{5040})\\

\end{tabular}

$P(5,3) \le 10$ 

\end{center}

\bigskip
We give next a proof for Theorem \ref{upperbound2}. The basic idea is that if $P(n_0,n_0-k) \le m$, and m is small enough compared to $n_0$, then one can prove that the diagonal in the lower bound table, such as Table \ref{LowerBounds},
{\it i.e.} $P(n,n-k)$, for all $n \ge n_0$, is also $m$. The argument is a counting argument based on the number of potent symbols and the length of the permutation. 

\bigskip

{\bf Theorem} \ref{upperbound2}.
Suppose that $P(n_0,n_0-k)\le m$ such that 
\begin{equation} \label{condition}
2k(m+1) < (n_0+1)(1+\lfloor n_0/(2k-1)\rfloor).
\end{equation}
Then $P(n,n-k)\le m$, for all $n\ge n_0 \ge 2k$.

\begin{proof}

Suppose to the contrary that $P(n,n-k)\ge m+1$, for some $n>n_0$. 
Let $n$ be the smallest such number. 
Let $A = \{ \pi_1,\pi_2,\dots,\pi_{m+1} \}$ be a PA on $n$ symbols with distance $n-k$. 
Let $k_i$ denote the number of potent symbols in position $i$, taken over all permutations in $A$.
Let $z=1+\lfloor n_0/(2k-1) 
\rfloor $, so $n_0 \ge (z-1)(2k-1)$.
We show that $k_i\ge z$, for all $i$. 
Suppose, by symmetry of argument, that $k_1\le z-1$ and (by rearranging permutation order) only 
$\pi_i,~ 1 \le i \le k_1$, have potent symbols in the first position. 
Observe that each permutation has $2k$ potent symbols, {\it i.e.} the symbols in $[1..k] \cup [n-k+1..n]$,
and that, by our assumption, all of the first $k_1$ permutations, 
and only the first $k_1$ permutations, have a potent symbol in position 1. 
So, if there are $z-1$ permutations, each adding $2k-1$ potent symbols 
to some position $j>1$, the total number of potent symbols (other than the one in position 1) is $(2k-1)(z-1)$.
Since the number of positions, namely, $n>n_0$, is greater than $(2k-1)(z-1)$, by the pigeonhole principle,  there is a position $j>1$ where all $\pi_i,~ 1 \le i\le k_1$, do not have potent symbols.  
Merge columns 1 and $j$ and decrease $n$. That is, do the following:

\begin{itemize}
\item for each permutation $\pi_i$,~$1 \le i \le k_1$, exchange the potent symbol in position 1 with the symbol in position $j$.
\item delete the symbol in position 1 in all permutations (they are no longer potent) and appropriately modify the symbols in each permutation so that they are consecutive integers (deletions may have created gaps).
\end{itemize}

The result is a PA of $m+1$ permutations on $n-1$ symbols with Chebyshev distance $n-k$. This  contradicts our choice of $n$ being smallest.

Note that the total number of potent symbols in the PA $A$ is $2k(m+1)$. Since $k_i \ge z$, for all $1 \le i \le n$,  $2k(m+1) \ge nz \ge (n_0+1)(1+\lfloor n_0/(2k-1)\rfloor)$ which contradicts Inequality \ref{condition}. 
\end{proof}

\begin{table}[htb]
\caption{\label{Pnm2-3}Lower bounds for $P(n,m,2)$ (left) and $P(n,m,3)$ (right).  The tight bounds are in bold.} 
\centering
\begin{tabular}
{cc}

\begin{tabular}{|r |r |r |r |r|}
\hline
$n/m$ & 2 & 3 & 4 & 5 \\ [0.5ex] 
\hline
4 & {\bf 4} & {\bf 6} & {\bf 1} & {\bf 
1}\\ 
\hline
5 &6&15&23&30\\
\hline
6 &  {\bf 9} &24&53&78\\
\hline
7 & 12&42&104&234\\
\hline
8 &  {\bf 16} &59&187&479\\
\hline
9 & 20&88&306&979\\ 
\hline
10 &  {\bf 25} &115&478&1,732\\ 
\hline
11&  30 &158&709&3,002\\
\hline
12 & {\bf 36} &202&1,028&4,805\\
\hline
13 &42 &261&1,430&7,490\\ 
\hline
14& {\bf 49} &322&1,953&11,165\\
\hline
15&56&400&2,600&16,291\\
%[1ex] 
\hline
\end{tabular}

&

\begin{tabular}{|r |r |r |r |r|}
\hline
$n/m$ & 2 & 3 & 4 & 5 \\ [0.5ex] 
\hline
4 & {\bf 2}& {\bf 3}& {\bf 1}& {\bf 1}\\ 
\hline
5 & {\bf 4} &6&6& {\bf 10}\\
\hline
6 &  {\bf 4} &  {\bf 8} &14&19\\
\hline
7 & 6&15&30&49\\
\hline
8 &  {\bf 9} & 24&49&107\\
\hline
9 &  {\bf 9} &  {\bf 27}&78&181\\ 
\hline
10 & 12&40&118&313\\ 
\hline
11&  {\bf 16} & 59 &177&530\\
\hline
12 & {\bf 16} & {\bf 64} &245&817\\
\hline
13 &20&85&333&1,232\\ 
\hline
14& {\bf 25}&116&466&1,838\\
\hline
15& {\bf 25} & {\bf 125} &601&2,620\\
%[1ex] 
\hline
\end{tabular}

\end{tabular}
\end{table} 

%\bigskip{}

{\bf Corollary \ref{ten}}.
$P(n,n-2) = 10$, for all $n\ge 5$.

\begin{proof}
$P(n,n-2) = 10$, for all $5 \le n \le 11$, by the clique approach. In Theorem \ref{upperbound2}, 
set $n_0=11, k=2$, and $m=10$. 
Then $z=1+\lfloor n_0/(2k-1)\rfloor=4$ and $2k(m+1)=44 < 48=(n_0+1)z$. So, $P(n,n-2) \le 10$, for all $n \ge 11$, follows by Theorem \ref{upperbound2}.
By Theorem \ref{diag} , $P(n,n-2) \ge 10$, for all $n \ge 5$. 
Therefore $P(n,n-2) = 10$, for all $n \ge 5$. 
\end{proof}

\bigskip
Theorem \ref{fixed} states 
that $P(n,d)$ values along the diagonal $n=d+r$ in Table \ref{LowerBounds} are all equal to $c_r$, if $n\ge d_r$, for some constants $c_r$ and $d_r$. 
Corollary \ref{ten} shows that these constants for $r=2$ are $c_2=10$ and $d_2 = 3$.

% -------------------------------
\section{Prefixes}
\label{pref}
Computed values for $P(n,m,d)$, for $2 \le d \le 5$, $4 \le n \le 15$, and $2 \le m \le 5$ are given in Tables \ref{Pnm2-3}, \ref{Pnm4-5}. 
For example, $P(9,3,4) \ge 15$, as shown in Table \ref{Pnm4-5}, means there is a set of 15 prefix strings of three symbols over the alphabet $[1..9]$ with pairwise Chebyshev distance 4. For example, 
$\{795,451,125,129,165,169,291,512,516,569,691,851,912,916,956\}$ is such a set. Our computations use a modification of the Random/Greedy algorithm to compute $Q(n,m,d)$. These sets are useful in applications of Theorem \ref{prefix} toward obtaining improved lower bounds. Our computed sets are available on our web site.

\begin{theorem} \label{Pnmd1}
If $d\mid n$ and $d\ge m\ge 2$, then 
$P(n,m,d)=(n/d)^m$.
\end{theorem}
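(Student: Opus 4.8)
The plan is to establish two matching bounds, writing $q = n/d$ so that the target value is $q^m$. The lower bound comes from an explicit construction, and the upper bound from a pigeonhole argument over a block decomposition of $[1..n]$; the hypothesis $d\ge m$ enters only in the lower bound, while $d\mid n$ is what makes the block decomposition even.

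For the lower bound I would build a set of $q^m$ prefixes directly, in the spirit of the residue construction behind Theorem \ref{mod}. Partition the usable symbols by residue class modulo $d$: for position $j$ with $1\le j\le m$, let $R_j=\{j,\,j+d,\,j+2d,\dots,j+(q-1)d\}$, which is exactly the set of symbols in $[1..n]$ congruent to $j$ mod $d$ and has cardinality $q$ since $n=qd$. Assigning to each position $j$ an arbitrary element of $R_j$ yields $|R_1|\cdots|R_m|=q^m$ length-$m$ strings. Because $m\le d$, the residues $1,2,\dots,m$ are pairwise distinct modulo $d$, so the sets $R_1,\dots,R_m$ are disjoint and every such string uses $m$ distinct symbols, i.e. is a genuine prefix. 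Any two distinct such strings differ in some position $j$, where both entries lie in $R_j$ and hence are congruent mod $d$ and unequal; their difference is a nonzero multiple of $d$ and so has absolute value at least $d$. Thus the pairwise Chebyshev distance is at least $d$, giving $P(n,m,d)\ge q^m$.

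For the upper bound I would partition $[1..n]$ into the $q$ consecutive blocks $B_t=\{(t-1)d+1,\dots,td\}$, each of size $d$. To each prefix $\sigma$ in an arbitrary set $A\in Q(n,m,d)$ I associate its block profile $(b_1,\dots,b_m)\in[1..q]^m$, where $\sigma(i)\in B_{b_i}$. There are exactly $q^m$ possible profiles. The key observation is that two entries lying in the same block differ by at most $d-1$; hence if two distinct prefixes share a profile, then in every position their symbols sit in a common block, forcing $d(\sigma,\tau)\le d-1<d$ and contradicting $d(A)\ge d$. Therefore the profile map is injective on $A$, so $|A|\le q^m$, and taking the maximum gives $P(n,m,d)\le q^m$. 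Combining the two inequalities yields $P(n,m,d)=(n/d)^m$.

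I expect the main subtlety to be bookkeeping rather than depth: verifying that $R_1,\dots,R_m$ are honestly disjoint (which is exactly where $d\ge m$ is used and where the argument would fail for $m>d$), and checking that every symbol in each $R_j$ really lies in $[1..n]$, which relies on $d\mid n$ so that the top element $j+(q-1)d=j+n-d\le n$. On the upper-bound side the only point needing care is the clean statement that intra-block differences are at most $d-1$ and that this holds simultaneously in all $m$ positions; once that is in place, the pigeonhole over the $q^m$ profiles is immediate.
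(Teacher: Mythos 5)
Your proof is correct and takes essentially the same route as the paper: your block-profile map is exactly the paper's injection $f(\pi)=\sigma$ with $\sigma(i)$ recording which length-$d$ block of $[1..n]$ contains $\pi(i)$, and your residue-class construction $R_j$ is precisely the paper's lower-bound set of strings with $\pi(j)\in\{j,\,j+d,\dots,j+(k-1)d\}$. The only difference is expository: you spell out the disjointness of the $R_j$ and the range check $j+(q-1)d\le n$ (the exact points where $d\ge m$ and $d\mid n$ are used), which the paper leaves implicit.
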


\begin{proof}
Let $k=n/d$. 
First, we show that $P(n,m,d)\le k^m$. 
Let $A$ be an array of size $P(n,m,d)$ in $Q(n,m,d)$. 
Map each permutation $\pi$ in $A$ to $[0..k^m-1]$ using $f(\pi)=\sigma$ where 
$\sigma(i)=j$ if $\pi(i)\in [jd+1\dots jd+d-1]$. 
Since $d(A)=d$, map $f$ is injective.
Therefore $P(n,m,d)\le k^m$.

To show the lower bound $P(n,m,d)\ge k^m$, consider set $A\in Q(n,m,d)$ of permutations $\pi$ such that $\pi(i)\in \{i,i+d,\dots,i+(k-1)d\}$ for all $i\in [1..m]$. 
Then $|A|=k^m$ and $d(A)=d$.
The theorem follows.
\end{proof}

\begin{theorem} \label{Pnmd2}
If $d\mid n$ and $d\ge m\ge 2$, then 
$P(n-i,m,d)=(n/d)^m$ for any $i\in [0..d-m]$.
\end{theorem}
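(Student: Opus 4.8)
The plan is to prove the two inequalities $P(n-i,m,d)\le (n/d)^m$ and $P(n-i,m,d)\ge (n/d)^m$, writing $k=n/d$ throughout and reusing the machinery of Theorem \ref{Pnmd1}. The upper bound comes essentially for free by monotonicity in the alphabet size: since $i\ge 0$ we have $[1..n-i]\subseteq [1..n]$, so every array $A\in Q(n-i,m,d)$ is also a member of $Q(n,m,d)$ (the permutations and their pairwise Chebyshev distances are unaffected by enlarging the ambient alphabet). Hence $P(n-i,m,d)\le P(n,m,d)=k^m$ by Theorem \ref{Pnmd1}. Alternatively, the injective map $f(\pi)=\sigma$ with $\sigma(j)=\lfloor(\pi(j)-1)/d\rfloor\in[0..k-1]$ used in the proof of Theorem \ref{Pnmd1} applies verbatim, since that argument never exploited the top of the alphabet.

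The crux is the lower bound, and the key observation is that the extremal construction from Theorem \ref{Pnmd1} is already \emph{compact}: it does not reach the top of the alphabet. Recall that construction takes all permutations $\pi$ with $\pi(j)\in\{j,j+d,\dots,j+(k-1)d\}$ for each position $j\in[1..m]$. The largest symbol that can occur is attained at $j=m$ and equals $m+(k-1)d=m+n-d=n-(d-m)$. For any $i\in[0..d-m]$ we have $n-i\ge n-(d-m)$, so the entire construction lives inside $[1..n-i]$. The $k^m$ strings so obtained are genuine length-$m$ permutations, because for $m\le d$ the candidate symbol sets for distinct positions lie in distinct residue classes modulo $d$ and are therefore disjoint; and any two distinct such permutations differ in some position $j$ by a nonzero multiple of $d$, so their pairwise Chebyshev distance is at least $d$. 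This exhibits a member of $Q(n-i,m,d)$ of size $k^m$, giving $P(n-i,m,d)\ge k^m$.

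Combining the two bounds yields $P(n-i,m,d)=(n/d)^m$ for every $i\in[0..d-m]$. I do not anticipate a genuine obstacle: the only things to verify carefully are the arithmetic identity $m+(k-1)d=n-(d-m)$ and the inequality $n-i\ge n-(d-m)$ for $i\le d-m$, which together certify that shrinking the alphabet by up to $d-m$ symbols costs nothing. The range $i\in[0..d-m]$ is precisely the slack between the largest symbol actually used, $n-(d-m)$, and the full alphabet size $n$, so the result is tight at the endpoint $i=d-m$ and cannot be pushed further by this construction.
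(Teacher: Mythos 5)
Your proposal is correct and follows essentially the same route as the paper: the upper bound by alphabet monotonicity reducing to the $i=0$ case of Theorem \ref{Pnmd1}, and the lower bound by observing that the extremal construction of Theorem \ref{Pnmd1} uses no symbol larger than $m+(k-1)d=n-(d-m)\le n-i$ for $i\in[0..d-m]$. Your explicit verification of the permutation property (disjoint residue classes modulo $d$) and of the pairwise distance merely spells out what the paper inherits implicitly from Theorem \ref{Pnmd1}.
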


\begin{proof}
Let $k=n/d$. 
By Theorem \ref{Pnmd1}, the theorem follows for $i=0$.
Then $P(n-i,m,d)\le k^m$ for $i\ge 1$.

To show lower bound $P(n-i,m,d)\ge k^m$, consider set $A\in Q(n,m,d)$ of all permutations $\pi$ such that $\pi(j)\in \{j,j+d,\dots,j+(k-1)d\}$, for all $j\in [1..m]$. 
All numbers in $\pi$ are $\le m+(k-1)d = kd+m-d = n+m-d\le n-i$.
The theorem follows.
\end{proof}

\begin{table}[htb]
\caption{\label{Pnm4-5} Lower bounds for $P(n,m,4)$ (left) and $P(n,m,5)$ (right). The tight bounds are in bold.} 
\centering
\begin{tabular}
{cc}

\begin{tabular}{|r |r |r |r |r|}
\hline
$n/m$ & 2 & 3 & 4 & 5 \\ [0.5ex] 
\hline
4 & {\bf 1} & {\bf 1} & {\bf 1} & {\bf 1} \\ 
\hline
5 & {\bf 2} & {\bf 3} & {\bf 3} & {\bf 3} \\
\hline
6 & {\bf 4} &6&6&9\\
\hline
7 & {\bf 4} & {\bf 8} &14&18\\
\hline
8 & {\bf 4} & {\bf 8} & {\bf 16} &30\\
\hline
9 & 6&15&28&55\\ 
\hline
10 & {\bf 9} &24&50&97\\ 
\hline
11& {\bf 9} & {\bf 27} &76&174\\
\hline
12 & {\bf 9} & {\bf 27} & {\bf 81} & 234\\
\hline
13&12&41&116&334\\ 
\hline
14& {\bf 16} &58&176&512\\
\hline
15& {\bf 16} & {\bf 64} &243&803\\
%[1ex] 
\hline
\end{tabular}

&

\begin{tabular}{|r |r |r |r |r|}
\hline
$n/m$ & 2 & 3 & 4 & 5 \\ [0.5ex] 
\hline
4 & {\bf 1} & {\bf 1} & {\bf 1} & {\bf 1} \\ 
\hline
5 & {\bf 1} & {\bf 1} & {\bf 1} & {\bf 1} \\
\hline
6 & {\bf 2} & {\bf 3} & {\bf 3} & {\bf 3} \\
\hline
7 & {\bf 4} &6&6&9\\
\hline
8 & {\bf 4} & {\bf 8} &14&18\\
\hline
9 & {\bf 4} & {\bf 8} & {\bf 16} &30\\ 
\hline
10 & {\bf 4} & {\bf 8} & {\bf 16} & {\bf 32}\\ 
\hline
11& 6&15&28&55\\
\hline
12 & {\bf 9} &24&49&95\\
\hline
13& {\bf 9} & {\bf 27} &77&173\\ 
\hline
14& {\bf 9} & {\bf 27} & {\bf 81} &236\\
\hline
15& {\bf 9} & {\bf 27} & {\bf 81} & {\bf 243}\\
%[1ex] 
\hline
\end{tabular}

\end{tabular}
\end{table}

\section{Conclusion and Open Problems}

 We have given several new lower and upper bounds (See Tables \ref{LowerBounds} and \ref{UpperBounds}) for $P(n,d)$ as well as several new techniques for their computation. 
 We conjecture that the bounds for $c_r$ and $d_r$ in Theorem \ref{fixed} can be improved. 
 % For example, from Table \ref{LowerBounds}  it appears that $c_3 = 33, d_3=4$, and $c_4 = 103, d_4=5$. 
For example, from Table {\ref{LowerBounds}}  it appears that $c_3 \ge  33$ and $c_4 \ge 103$. Is it true that $c_3 = 33, d_3=4$, and $c_4 = 103, d_4=5$?

We computed lower bounds for $P(n,m,d)$ for $n\le 15$ and $m\le 5$ (see Tables \ref{Pnm2-3} and \ref{Pnm4-5}).
The computation of bounds for $P(n,m,d)$ is significantly faster than the computation of bounds for $P(n,d)$ if $m$ is small.
Is there a polynomial time algorithm for computing $P(n,m,d)$, for $m=O(1)$?

%\bibliographystyle{abbrv} 
%\bibliography{refs}

\end{document}